\newtheorem{theorem}{Theorem}[section]
\newtheorem{definition}[theorem]{Definition}
\newtheorem{proposition}[theorem]{Proposition}
\begin{document}

\title{Liberation theory for noncommutative homogeneous spaces}

\author{Teodor Banica}
\address{T.B.: Department of Mathematics, Cergy-Pontoise University, 95000 Cergy-Pontoise, France. {\tt teodor.banica@u-cergy.fr}}

\subjclass[2000]{46L65 (46L54)}
\keywords{Liberation theory, Homogeneous space}

\begin{abstract}
We discuss the liberation question, in the homogeneous space setting. Our first series of results concerns the axiomatization and classification of the families of compact quantum groups $G=(G_N)$ which are ``uniform'', in a suitable sense. We study then the quotient spaces of type $X=(G_M\times G_N)/(G_L\times G_{M-L}\times G_{N-L})$, and the liberation operation for them, with a number of algebraic and probabilistic results.
\end{abstract}

\maketitle

\section*{Introduction}

The notion of noncommutative space goes back to an old theorem of Gelfand, stating that any commutative $C^*$-algebra must be of the form $C(X)$, for a certain compact space $X$. In view of this result, one can define the category of ``noncommutative compact spaces'' to be the category of $C^*$-algebras, with the arrows reversed. The category of usual compact spaces embeds then covariantly into this category, via $X\to C(X)$.

Once again by using the Gelfand theorem, each noncommutative space $X$ can be thought of as appearing as ``liberation'' of its classical version $X_{class}$, which is obtained by dividing the corresponding algebra $C(X)$ by its commutator ideal.

We will be interested here in the liberation operation, in the algebraic manifold context. Given a family of noncommutative polynomials $P_i\in\mathbb C<z_1,\ldots,z_N>$, the associated noncommutative manifold $X$, and its classical version $X_{class}$, are given by:
$$\begin{matrix}
X&=&Spec\left(C^*\left(z_1,\ldots,z_N\Big|P_i(z_1,\ldots,z_N)=0\right)\right)\\
\\
\bigcup&&\bigcup\\
\\
X_{class}&=&\left\{(z_1,\ldots,z_N)\in\mathbb C^N\Big|P_i(z_1,\ldots,z_N)=0\right\}
\end{matrix}$$ 

Here the family of polynomials $\{P_i\}$ is assumed to be such that the biggest $C^*$-norm on the universal $*$-algebra $<z_1,\ldots,z_N|P_i(z_1,\ldots,z_N)=0>$ is bounded.

The liberation operation $X_{class}\to X$ can be axiomatized in the quantum group context, the idea being that the category of pairings, which encodes in an abstract way the commutation relations $ab=ba$, must be replaced by a new category of partitions. The theory here, based on Woronowicz's fundamental work in \cite{wo1}, \cite{wo2}, on Wang's free quantum groups \cite{wa1}, \cite{wa2}, on the Weingarten formula \cite{bb+}, \cite{bbc}, \cite{csn}, \cite{wei}, and on the liberation philosophy in free probability theory \cite{bpa}, \cite{nsp}, \cite{spe}, \cite{vdn}, was developed in \cite{bsp}.

In the homogeneous space case, where the general study goes back to \cite{boc}, \cite{po1}, \cite{po2}, some related theory, concerning spaces of type $X=G_N/G_{N-M}$, was developed in \cite{bgo}, and then in \cite{bss}. Such spaces were shown to have a number of interesting features, making them potential candidates for an algebraic manifold extension of \cite{bsp}, provided that the family of compact quantum groups $G=(G_N)$ producing them satisfies:
\begin{enumerate}
\item The ``easiness'' condition in \cite{bsp}, stating that we must have $S_N\subset G_N$, for any $N\in\mathbb N$, with these inclusions being of a certain special type.

\item The ``uniformity'' condition, stating that we must have $G_N\cap U_{N-M}^+=G_{N-M}$, with respect to the standard embedding $U_{N-M}^+\subset U_N^+$.
\end{enumerate}

We will review here this work, by using some new ideas, from \cite{ba1}, \cite{ba2}, \cite{ba3}. On one hand, we will replace the easiness assumption by a condition of type $H_N\subset G_N$, which will allow us to use a twisting parameter $q=\pm1$. On the other hand, we will study more general quotient spaces, depending on parameters $L\leq M\leq N$, as follows:
$$X=(G_M\times G_N)\big/(G_L\times G_{M-L}\times G_{N-L})$$ 

Our main result will be a verification of the Bercovici-Pata liberation criterion, for certain variables associated $\chi\in C(X)$, in a suitable $L,M,N\to\infty$ limit.

There are many questions raised by the present work. Here are some of them:
\begin{enumerate}
\item A first question concerns the full classification of the quantum groups satisfying the above conditions. For some recent advances here, see \cite{fre}, \cite{rwe}, \cite{tw1}, \cite{tw2}.

\item A second question concerns the validity of the quantum isometry group formula $G^+(X^+)=G(X)^+$, in relation with the rigidity results in \cite{chi}, \cite{gjo}.

\item Yet another question regards the possible applications of the present formalism to free probability invariance questions, in the spirit of \cite{csp}, \cite{ksp}.

\item Finally, there are as well several interesting questions in relation with the axiomatization problem for the noncommutative algebraic manifolds \cite{kvv}.
\end{enumerate}

Finally, regarding the general presentation of the paper:

\begin{enumerate}
\item We use Woronowicz's quantum group formalism in \cite{wo1}, \cite{wo2}, with the extra axiom $S^2=id$. A reference here is the book \cite{ntu}. Regarding the noncommutative homogeneous spaces, whose axiomatization is known to run into several difficulties \cite{bss}, \cite{boc}, \cite{dya}, \cite{po1}, \cite{po2}, \cite{sol}, we use here a simplified formalism, best adapted to our examples, that we intend to fully clarify in a forthcoming paper.

\item The present work is a technical continuation of \cite{ba1}, \cite{ba2}, \cite{ba3}, \cite{bgo}, \cite{bss}, with the aim of basically enlarging a list of examples. As already mentioned, the general theory is not available yet.  We have therefore opted for an ``example-first'' presentation. This is actually in tune with the easy quantum group literature, where most of the basic examples were constructed and studied long before \cite{bsp}.
\end{enumerate}

The paper is organized as follows: in 1-2 we discuss the orthogonal and unitary cases, in 3-4 we introduce all the quantum groups that we are interested in, and we study the associated homogeneous spaces, and in 5-6 we discuss probabilistic aspects.

\medskip

\noindent {\bf Acknowledgements.} I would like to thank the referee for a careful reading of the manuscript, and for a number of useful suggestions. This work was partly supported by the ``Harmonia'' NCN grant 2012/06/M/ST1/00169.

\section{Partial isometries}

In this section and in the next one we discuss the construction of the homogeneous spaces that we are interested in, in the case where the underlying groups or quantum groups are $O_N,U_N$, or their twists $\bar{O}_N,\bar{U}_N$, or their free versions $O_N^+,U_N^+$. 

We begin with the classical case. Best is to start as follows:

\begin{definition}
Associated to any integers $L\leq M\leq N$ are the spaces
$$O_{MN}^L=\left\{T:E\to F\ {\rm isometry}\Big|E\subset\mathbb R^N,F\subset\mathbb R^M,\dim_\mathbb RE=L\right\}$$
$$U_{MN}^L=\left\{T:E\to F\ {\rm isometry}\Big|E\subset\mathbb C^N,F\subset\mathbb C^M,\dim_\mathbb CE=L\right\}$$
where the notion of isometry is with respect to the usual real/complex scalar products.
\end{definition}

As a first observation, at $L=M=N$ we obtain the groups $O_N,U_N$. More generally, at $M=N$ we obtain the various components of the semigroups $\widetilde{O}_N,\widetilde{U}_N$ of partial isometries of $\mathbb R^N,\mathbb C^N$, studied in \cite{ba2}, which are by definition given by:
$$\widetilde{O}_N=\bigcup_{L=0}^NO_{NN}^L\qquad\qquad\widetilde{U}_N=\bigcup_{L=0}^NU_{NN}^L$$

Yet another interesting specialization is $L=M=1$. Here the elements of $O_{1N}^1$ are the isometries $T:E\to\mathbb R$, with $E\subset\mathbb R^N$ one-dimensional, and such an isometry is uniquely determined by the element $T^{-1}(1)\in\mathbb R^N$, which must belong to the sphere $S^{N-1}_\mathbb R$. Thus, we have $O_{1N}^1=S^{N-1}_\mathbb R$. Similarly, in the complex case we have $U_{1N}^1=S^{N-1}_\mathbb C$.

In general, the most convenient is to view the elements of $O_{MN}^L,U_{MN}^L$ as rectangular matrices, and to use matrix calculus for their study:

\begin{proposition}
We have identifications of compact spaces
$$O_{MN}^L\simeq\left\{U\in M_{M\times N}(\mathbb R)\Big|UU^t={\rm projection\ of\ trace}\ L\right\}$$
$$U_{MN}^L\simeq\left\{U\in M_{M\times N}(\mathbb C)\Big|UU^*={\rm projection\ of\ trace}\ L\right\}$$
with each partial isometry being identified with the corresponding rectangular matrix.
\end{proposition}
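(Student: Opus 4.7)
My plan is to construct explicit mutually inverse maps between the two sides, treating the real case in detail (the complex case being identical with $U^t$ replaced by $U^*$).

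In the forward direction, given an isometry $T:E\to F$ with $E\subset\mathbb R^N$ of dimension $L$ and $F\subset\mathbb R^M$, I extend $T$ to a linear map $\widetilde T:\mathbb R^N\to\mathbb R^M$ by setting $\widetilde T|_{E^\perp}=0$, and I let $U\in M_{M\times N}(\mathbb R)$ be its matrix in the standard bases. To compute $UU^t$, I observe that for $y\in\mathbb R^M$ the vector $U^t y$ lies in $E$ (because $\langle U^t y,x\rangle=\langle y,Ux\rangle=0$ whenever $x\in E^\perp$), and on $E$ the operator $U^t$ acts as the inverse isometry $T^{-1}:F\to E$ extended by $0$ on $F^\perp$. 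Composing, $UU^t$ acts as $T\circ T^{-1}=\mathrm{id}$ on $F$ and as $0$ on $F^\perp$, so $UU^t$ is the orthogonal projection onto $F$, of trace $\dim F=L$.

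In the reverse direction, given $U\in M_{M\times N}(\mathbb R)$ with $P:=UU^t$ an orthogonal projection of trace $L$, I set $F=\mathrm{Im}(P)\subset\mathbb R^M$, $E=(\ker U)^\perp\subset\mathbb R^N$, and $T=U|_E$. The cleanest verification is via the singular value decomposition $U=\sum_i\sigma_i e_if_i^t$ with $\sigma_i\geq 0$ and orthonormal systems $\{e_i\}\subset\mathbb R^M$, $\{f_i\}\subset\mathbb R^N$: the identity $UU^t=\sum_i\sigma_i^2 e_ie_i^t$ forces $\sigma_i\in\{0,1\}$ with exactly $L$ of them equal to $1$, so that $\dim E=L$, the map $T:E\to F$ is a genuine isometry, and $\mathrm{Im}(U)=F$ coincides with $\mathrm{Im}(P)$. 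That the two constructions are mutually inverse is then immediate from the formulas.

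Finally, for the topological claim, the matrix side is manifestly a closed, bounded (by operator norm $\leq 1$) subset of $M_{M\times N}(\mathbb R)$, hence compact; transporting this topology along the bijection equips $O_{MN}^L$ with its natural Grassmannian-bundle topology, making the identification a homeomorphism. No genuine obstacle is expected; the only point requiring care is keeping $UU^t$ (projection onto the target $F$) distinct from $U^tU$ (projection onto the source $E$), since both are projections of trace $L$ but only the former is asserted in the statement.
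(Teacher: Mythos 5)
Your proposal is correct and follows the same route as the paper: extend the partial isometry by zero on $E^\perp$ and identify the resulting linear map with its rectangular matrix, whence $UU^t$ is the projection onto the target $F$. The paper leaves all verifications implicit (its proof is a single sentence), while you carefully check both directions (the backward one via SVD) and the compactness claim, so you have simply filled in the details the paper takes for granted.
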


\begin{proof}
We can indeed identify the partial isometries $T:E\to F$ with their corresponding extensions $U:\mathbb R^N\to\mathbb R^M$, $U:\mathbb C^N\to\mathbb C^M$, obtained by setting $U_{E^\perp}=0$, and then identify these latter linear maps $U$ with the corresponding rectangular matrices.
\end{proof}

As an illustration, at $L=M=N$ we recover in this way the usual matrix description of $O_N,U_N$. More generally, at $M=N$ we recover the usual matrix description of $\widetilde{O}_N,\widetilde{U}_N$. See \cite{ba2}. Finally, at $L=M=1$ we obtain the usual description of $S^{N-1}_\mathbb R,S^{N-1}_\mathbb C$.

Now back to the general case, observe that the isometries $T:E\to F$, or rather their extensions $U:\mathbb K^N\to\mathbb K^M$, with $\mathbb K=\mathbb R,\mathbb C$, obtained by setting $U_{E^\perp}=0$, can be composed with the isometries of $\mathbb K^M,\mathbb K^N$, according to the following scheme:
$$\xymatrix@R=15mm@C=15mm{
\mathbb K^N\ar[r]^{B^*}&\mathbb K^N\ar@.[r]^U&\mathbb K^M\ar[r]^A&\mathbb K^M\\
B(E)\ar@.[r]\ar[u]&E\ar[r]^T\ar[u]&F\ar@.[r]\ar[u]&A(F)\ar[u]
}$$

In other words, the groups $O_M\times O_N,U_M\times U_N$ act respectively on $O_{MN}^L,U_{MN}^L$. With the identifications in Proposition 1.2 made, the statement here is:

\begin{proposition}
We have action maps as follows, which are transitive,
$$O_M\times O_N\curvearrowright O_{MN}^L\quad:\quad (A,B)U=AUB^t$$
$$U_M\times U_N\curvearrowright U_{MN}^L\quad:\quad (A,B)U=AUB^*$$
whose stabilizers are respectively $O_L\times O_{M-L}\times O_{N-L}$ and $U_L\times U_{M-L}\times U_{N-L}$.
\end{proposition}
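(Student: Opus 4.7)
The plan is to use the rectangular-matrix model from Proposition~1.2, in which $O_{MN}^L$ consists of matrices $U\in M_{M\times N}(\mathbb R)$ with $UU^t$ a rank-$L$ projection (and analogously for the unitary case). Four things have to be checked: that the formula sends $O_{MN}^L$ into itself, that it defines a group action, that this action is transitive, and that the stabilizer of a chosen basepoint is $O_L\times O_{M-L}\times O_{N-L}$. Well-definedness is immediate: if $P=UU^t$ is a projection of trace $L$ and $A\in O_M$, $B\in O_N$, then $(AUB^t)(AUB^t)^t=AUU^tA^t=APA^t$, which is again self-adjoint, idempotent, and of trace $L$. The action axioms are then routine.

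For transitivity I would fix as basepoint the matrix $U_0\in M_{M\times N}(\mathbb R)$ with the identity $I_L$ in the top-left $L\times L$ block and zero elsewhere. Given $U\in O_{MN}^L$, the projection $UU^t$ has rank $L$, so one can find $A\in O_M$ diagonalizing it to $\mathrm{diag}(I_L,0)$; this forces the last $M-L$ rows of $AU$ to vanish, and the remaining $L$ rows form an $L\times N$ matrix $V$ with $VV^t=I_L$. Extending these $L$ rows to an orthonormal basis of $\mathbb R^N$ produces $B\in O_N$ with $AUB^t=U_0$, giving transitivity.

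The real work is the stabilizer computation. I would decompose $A\in O_M$ and $B\in O_N$ into $2\times 2$ blocks aligned with the splittings $M=L+(M-L)$ and $N=L+(N-L)$; writing $AU_0B^t=U_0$ in blocks yields $A_1B_1^t=I_L$ together with $A_1B_3^t=A_3B_1^t=A_3B_3^t=0$. The first equation forces $A_1,B_1$ to be invertible with $B_1=(A_1^t)^{-1}$. Combined with the orthogonality relations $A_1A_1^t+A_2A_2^t=I_L$ and $B_1B_1^t+B_2B_2^t=I_L$, this shows that $A_1A_1^t\le I_L$ is an invertible positive matrix whose inverse $B_1B_1^t=(A_1A_1^t)^{-1}$ is also bounded by $I_L$. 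The key observation is that such a matrix must equal $I_L$, whence $A_1,B_1\in O_L$ with $B_1=A_1$, and then $A_2=A_3=0$, $B_2=B_3=0$ follow from the remaining orthogonality relations. The stabilizer is thus parametrized by triples $(A_1,A_4,B_4)\in O_L\times O_{M-L}\times O_{N-L}$, as claimed.

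The complex case is parallel, with $t$ replaced by $*$ throughout and the analogous positivity observation (a positive invertible matrix $P$ with $P\le I$ and $P^{-1}\le I$ equals $I$) applied to $A_1A_1^*$. I expect no substantive new difficulty there. The only nontrivial ingredient is the invertibility/equality argument on the $L\times L$ corner $A_1$; once that is in hand, the block-diagonal form of stabilizer elements, and hence the identification with $O_L\times O_{M-L}\times O_{N-L}$ (respectively $U_L\times U_{M-L}\times U_{N-L}$), is mechanical.
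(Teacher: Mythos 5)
Your proof is correct, and it supplies more detail than the paper on well-definedness and transitivity (the paper just asserts both). For the stabilizer, however, your route diverges from the paper's: you work directly with $AU_0B^t=U_0$, obtain $A_1B_1^t=I_L$ plus three vanishing conditions, and then need a positivity lemma (a positive invertible $P$ with $P\le I$ and $P^{-1}\le I$ must equal $I$) to conclude that the $L\times L$ corner is orthogonal. The paper instead first multiplies the stabilizer equation by $B$ on the right, rewriting it as $AU_0=U_0B$; since $AU_0$ is the first $L$ columns of $A$ padded with zeros and $U_0B$ is the first $L$ rows of $B$ padded with zeros, equality forces immediately that $A$ and $B$ share the same $L\times L$ top-left block $x$, that the bottom-left block of $A$ vanishes, and that the top-right block of $B$ vanishes, with no positivity argument at all; the remaining off-diagonal blocks of $A$ and $B$ are then killed by orthogonality of columns/rows. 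Both arguments are valid; the paper's $AU=UB$ trick is shorter and purely combinatorial, while yours is a natural brute-force block computation at the cost of the extra spectral observation. Note also a minor point: you can dispense with the positivity lemma even in your setup, since once $A_1B_1^t=I_L$ gives invertibility, the relations $A_3B_1^t=0$ and $A_1B_3^t=0$ already force $A_3=0$ and $B_3=0$, after which $A_1^tA_1+A_3^tA_3=I_L$ reduces to $A_1^tA_1=I_L$ directly.
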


\begin{proof}
We have indeed action maps as in the statement, which are transitive. Let us compute now the stabilizer $G$ of the point $U=(^1_0{\ }^0_0)$. Since the elements $(A,B)\in G$ satisfy $AU=UB$, their components must be of the form $A=(^x_0{\ }^*_a),B=(^x_*{\ }^0_b)$. Now since $A,B$ are both unitaries, these matrices follow to be block-diagonal, and we obtain:
$$G=\left\{(A,B)\Big|A=\begin{pmatrix}x&0\\0&a\end{pmatrix},B=\begin{pmatrix}x&0\\ 0&b\end{pmatrix}\right\}$$

We conclude that the stabilizer of $U=(^1_0{\ }^0_0)$ is parametrized by triples $(x,a,b)$ belonging respectively to $O_L\times O_{M-L}\times O_{N-L}$ and $U_L\times U_{M-L}\times U_{N-L}$, as claimed.
\end{proof}

Finally, let us work out the quotient space description of $O_{MN}^L,U_{MN}^L$:

\begin{theorem}
We have isomorphisms of homogeneous spaces as follows,
\begin{eqnarray*}
O_{MN}^L&=&(O_M\times O_N)/(O_L\times O_{M-L}\times O_{N-L})\\
U_{MN}^L&=&(U_M\times U_N)/(U_L\times U_{M-L}\times U_{N-L})
\end{eqnarray*}
with the quotient maps being given by $(A,B)\to AUB^*$, where $U=(^1_0{\ }^0_0)$.
\end{theorem}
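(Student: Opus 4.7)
The plan is to deduce this statement directly from Proposition 1.3, which has essentially done all the hard work. I would introduce the orbit map $\phi:O_M\times O_N\to O_{MN}^L$ given by $\phi(A,B)=AUB^*$, where $U=\begin{pmatrix}1_L&0\\0&0\end{pmatrix}$ is the base point used in the proof of Proposition 1.3, and analogously in the unitary case. Transitivity of the action, already established there, immediately yields surjectivity of $\phi$.

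The next step is to observe that $\phi(A,B)=\phi(A',B')$ if and only if $(A^{-1}A',B^{-1}B')$ lies in the stabilizer $G$ of $U$, which Proposition 1.3 identifies with $O_L\times O_{M-L}\times O_{N-L}$ (respectively $U_L\times U_{M-L}\times U_{N-L}$) via the block-diagonal parametrization by triples $(x,a,b)$. Hence $\phi$ factors through a continuous bijection $\bar\phi$ from the quotient onto $O_{MN}^L$.

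Finally, a compactness argument closes the proof. The quotient of the compact space $O_M\times O_N$ by a closed subgroup is compact, and the target $O_{MN}^L$ is Hausdorff, since it embeds into the ambient matrix space via the identification in Proposition 1.2. A continuous bijection from a compact space to a Hausdorff space is automatically a homeomorphism, so $\bar\phi$ is the desired isomorphism of homogeneous spaces, intertwining the left $O_M\times O_N$-actions by construction.

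I do not anticipate any serious obstacle. The orbit-stabilizer principle for compact group actions is classical, and all the ingredients — the matrix model, the transitivity, and the explicit form of the stabilizer — have already been supplied by Propositions 1.2 and 1.3. The only genuine content was the stabilizer computation carried out there; the present statement is essentially a repackaging of that result in homogeneous-space language, which is what makes it the natural bridge to the quantum group generalizations pursued in the sequel.
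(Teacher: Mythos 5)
Your argument is correct and is essentially the paper's own proof: the paper likewise treats Theorem 1.4 as a reformulation of Proposition 1.3, appealing to the transitivity and stabilizer computation established there with base point $U=(^1_0{\ }^0_0)$. You have merely spelled out the standard orbit-stabilizer and compact-to-Hausdorff continuity details that the paper leaves implicit.
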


\begin{proof}
This is just a reformulation of Proposition 1.3 above, by taking into account the fact that the fixed point used in the proof there was $U=(^1_0{\ }^0_0)$.
\end{proof}

Once again, the basic examples here come from the cases $L=M=N$ and $L=M=1$, where the quotient spaces at right are respectively $O_N,U_N$ and $O_N/O_{N-1},U_N/U_{N-1}$. In fact, in the general $L=M$ case we obtain the following spaces, considered in \cite{bss}:
\begin{eqnarray*}
O_{MN}^M&=&(O_M\times O_N)/(O_M\times O_{N-M})=O_N/O_{N-M}\\
U_{MN}^M&=&(U_M\times U_N)/(U_M\times U_{N-M})=U_N/U_{N-M}
\end{eqnarray*}

For some further information on these spaces, we refer to \cite{ba2}, \cite{bss}.

\section{Liberations and twists}

We discuss now some noncommutative versions of the above constructions. We use the quantum group formalism of Woronowicz \cite{wo1}, \cite{wo2}, with the extra axiom $S^2=id$. In other words, we consider pairs $(A,u)$ consisting of a $C^*$-algebra $A$, and a unitary matrix $u\in M_N(A)$, such that the following formul\ae\  define morphisms of $C^*$-algebras:
$$\Delta(u_{ij})=\sum_ku_{ik}\otimes u_{kj}\quad,\quad\varepsilon(u_{ij})=\delta_{ij}\quad,\quad S(u_{ij})=u_{ji}^*$$

These morphisms are called comultiplication, counit and antipode. We write $A=C(G)$, and call $G$ a compact matrix quantum group. For full details here, see \cite{ntu}. 

We recall from \cite{ba1}, \cite{bbc} that the compact groups $O_N,U_N$ can be twisted, by replacing the commutation relations $ab=ba,ab^*=b^*a$ between the standard coordinates $u_{ij}(g)=g_{ij}$ with the following commutation/anticommutation relations:
$$ab^\times=\begin{cases}
-b^\times a&{\rm for}\ a\neq b\ {\rm on\ the\ same\ row\ or\ column\ of}\ u\\
b^\times a&{\rm otherwise}
\end{cases}$$

Here $b^\times=b,b^*$, and the precise statement is that these relations, when applied to a matrix $u=(u_{ij})$ which is orthogonal ($u=\bar{u},u^t=u^{-1}$, where $\bar{u}=(u_{ij}^*)$), respectively biunitary ($u^*=u^{-1},u^t=\bar{u}^{-1}$) produce quantum groups $\bar{O}_N,\bar{U}_N$. See \cite{ba1}, \cite{bbc}.

We can liberate $O_{MN}^L,U_{MN}^L$, and then twist them, as follows:

\begin{definition}
Associated to any integers $L\leq M\leq N$ are the algebras
\begin{eqnarray*}
C(O_{MN}^{L+})&=&C^*\left((u_{ij})_{i=1,\ldots,M,j=1,\ldots,N}\Big|u=\bar{u},uu^t={\rm projection\ of\ trace}\ L\right)\\
C(U_{MN}^{L+})&=&C^*\left((u_{ij})_{i=1,\ldots,M,j=1,\ldots,N}\Big|uu^*,\bar{u}u^t={\rm projections\ of\ trace}\ L\right)
\end{eqnarray*}
and their quotients $C(\bar{O}_{MN}^L),C(\bar{U}_{MN}^L)$, obtained by imposing the twisting relations.
\end{definition}

Observe that the above universal algebras are well-defined, because the trace conditions, which read $\sum_{ij}u_{ij}u_{ij}^*=\sum_{ij}u_{ij}^*u_{ij}=L$, show that we have $||u_{ij}||\leq\sqrt{L}$.

We have inclusions between the various spaces constructed so far, as follows:
$$\xymatrix@R=15mm@C=15mm{
U_{MN}^L\ar[r]&U_{MN}^{L+}&\bar{U}_{MN}^L\ar@.[l]\\
O_{MN}^L\ar[r]\ar[u]&O_{MN}^{L+}\ar[u]&\bar{O}_{MN}^L\ar@.[l]\ar@.[u]}$$

Indeed, the inclusions at right follow from definitions, and those at left come from Proposition 1.2, and from the fact that $O_{MN}^L,U_{MN}^L$ are stable by conjugation.

At the level of basic examples now, we first have the following result:

\begin{proposition}
At $L=M=1$ we obtain the diagram
$$\xymatrix@R=15mm@C=15mm{
S^{N-1}_\mathbb C\ar[r]&S^{N-1}_{\mathbb C,+}&\bar{S}^{N-1}_\mathbb C\ar@.[l]\\
S^{N-1}_\mathbb R\ar[r]\ar[u]&S^{N-1}_{\mathbb R,+}\ar[u]&\bar{S}^{N-1}_\mathbb R\ar@.[l]\ar@.[u]}$$
consisting of the liberations and twists of the spheres $S^{N-1}_\mathbb R,S^{N-1}_\mathbb C$.
\end{proposition}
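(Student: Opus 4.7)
The plan is to simply unfold Definition 2.1 at $L=M=1$ and match the resulting relations with the standard defining relations of the six spheres in the diagram. At $L=M=1$ the matrix $u=(u_{ij})$ is a single row, so I would relabel $u_i:=u_{1i}$ and compute the $1\times 1$ products $uu^t=\sum_i u_i^2$, $uu^*=\sum_i u_iu_i^*$, $\bar uu^t=\sum_i u_i^*u_i$. Since a $1\times 1$ projection of trace $1$ is just the scalar $1$, the trace/projection conditions collapse to the familiar sphere equations.

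For the free corners of the diagram, this gives immediately
\begin{eqnarray*}
C(O_{1N}^{1+})&=&C^*\bigl(u_1,\ldots,u_N\bigm| u_i=u_i^*,\ \textstyle\sum_i u_i^2=1\bigr),\\
C(U_{1N}^{1+})&=&C^*\bigl(u_1,\ldots,u_N\bigm| \textstyle\sum_i u_iu_i^*=\sum_i u_i^*u_i=1\bigr),
\end{eqnarray*}
which are the standard presentations of $C(S^{N-1}_{\mathbb R,+})$ and $C(S^{N-1}_{\mathbb C,+})$. The classical corners $O_{1N}^1=S^{N-1}_\mathbb R$ and $U_{1N}^1=S^{N-1}_\mathbb C$ have already been observed in the discussion following Definition 1.1, so nothing new is required there.

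For the twisted corners, I would observe that at $L=M=1$ the matrix $u$ has only one row, so the notion of ``same row or column'' reduces simply to ``same row'': the twisting rule then imposes $u_iu_j^\times=-u_j^\times u_i$ for all $i\neq j$, on top of the free sphere relations above. Since this is exactly the presentation of $C(\bar S^{N-1}_\mathbb R)$ and $C(\bar S^{N-1}_\mathbb C)$, the identifications $\bar O_{1N}^1=\bar S^{N-1}_\mathbb R$ and $\bar U_{1N}^1=\bar S^{N-1}_\mathbb C$ follow. The inclusions and dotted arrows in the diagram then transport automatically from the corresponding arrows in the diagram preceding the statement.

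There is no real obstacle here: the proof is a direct unfolding of definitions, and the only point worth a sentence is the remark that at $L=M=1$ the column-index of $u_{1i}$ is essentially invisible, so only the row-based part of the twisting rule survives, which is precisely what is needed to recover the twisted spheres.
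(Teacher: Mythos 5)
Your argument is correct and follows essentially the same route as the paper: both proofs reduce to a direct comparison of the defining relations at $L=M=1$ with the standard presentations of the six spheres from \cite{ba1}. You spell out the comparison (in particular the observation that a $1\times1$ projection of trace $1$ is the scalar $1$, and that at $L=M=1$ distinct coordinates are always on the same row so the twisting rule collapses to a pure anticommutation rule), whereas the paper states the presentations and leaves the matching implicit; the content is the same.
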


\begin{proof}
We recall from \cite{ba1} that the various spheres are constructed as follows, with the symbol $\times$ standing for ``commutative'', ``twisted'' and ``free'', respectively:
\begin{eqnarray*}
C(S^{N-1}_{\mathbb R,\times})&=&C^*_\times\left(z_1,\ldots,z_N\Big|z_i=z_i^*,\sum_iz_i^2=1\right)\\
C(S^{N-1}_{\mathbb C,\times})&=&C^*_\times\left(z_1,\ldots,z_N\Big|\sum_iz_iz_i^*=\sum_iz_i^*z_i=1\right)
\end{eqnarray*}

Now by comparing with the definition of $O_{1N}^{1\times},U_{1N}^{1\times}$, this proves our claim.
\end{proof}

We have as well the following result, once again making the link with \cite{ba1}:

\begin{proposition}
At $L=M=N$ we obtain the diagram
$$\xymatrix@R=15mm@C=15mm{
U_N\ar[r]&U_N^+&\bar{U}_N\ar@.[l]\\
O_N\ar[r]\ar[u]&O_N^+\ar[u]&\bar{O}_N\ar@.[l]\ar@.[u]}$$
consisting of the liberations and twists of the groups $O_N,U_N$.
\end{proposition}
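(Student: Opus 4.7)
The approach is to verify, case by case, that at $L=M=N$ the defining data for each of the six spaces in Definition 2.1 and the preceding diagram reduce to the standard presentations of $O_N, U_N$ and their liberations and twists. For the classical entries this is a direct application of Theorem 1.4; for the free and twisted entries one needs to show that the projection condition ``$uu^t=$ projection of trace $L$'' degenerates, at $L=N$, to ``$u$ is orthogonal'' in the universal $C^*$-algebra sense.

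First I would dispatch the classical case by noting that in Theorem 1.4, the stabilizer $O_L\times O_{M-L}\times O_{N-L}$ becomes $O_N\times O_0\times O_0\simeq O_N$ embedded diagonally as $A\mapsto(A,A)$, and $(A,B)\mapsto AB^t$ descends to an isomorphism $(O_N\times O_N)/O_N\simeq O_N$; the unitary case is identical. Next I would prove the general fact that a self-adjoint projection $p\in M_N(A)$ with $\sum_i p_{ii}=N\cdot 1_A$ must equal the identity matrix: the inequality $0\leq p\leq 1$ in $M_N(A)$ gives $0\leq p_{ii}\leq 1_A$ for each $i$ (via $p_{ii}=e_i^* p e_i$), so $\sum_i(1-p_{ii})=0$ with positive summands forces $p_{ii}=1$; expanding $p_{ii}=(p^2)_{ii}=1+\sum_{k\neq i}p_{ik}p_{ik}^*$ then kills the off-diagonal entries. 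Applied to $p=uu^t$ in the orthogonal case and to $p=uu^*,\bar u u^t$ in the unitary case, the relations of Definition 2.1 collapse at $L=M=N$ to $u=\bar u$ and $uu^t=1$, respectively to $uu^*=\bar u u^t=1$, matching Wang's presentations of $C(O_N^+)$ and $C(U_N^+)$. The twisted rows follow identically, with the anticommutation relations from \cite{ba1}, \cite{bbc} preserved verbatim.

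The main obstacle is a subtle one: the condition $uu^t=1$ alone, interpreted universally, only makes $u$ a coisometry over the $C^*$-algebra $M_N(A)$, whereas $C(O_N^+)$ is presented by two-sided orthogonality $uu^t=u^t u=1$. To close this gap I would appeal to Woronowicz's framework used throughout this paper: one checks that the formula $\Delta(u_{ij})=\sum_k u_{ik}\otimes u_{kj}$ preserves the defining relations and hence descends to the universal algebra, producing a compact matrix quantum group structure in which the fundamental corepresentation is automatically invertible; this forces $u^t u=1$. The analogous argument handles $U_N^+$ and the twists, and completes the identification.
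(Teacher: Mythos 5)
Your positivity lemma (a projection $p \in M_N(A)$ with $\sum_i p_{ii} = N\cdot 1_A$ equals the identity) is correct and is in fact a slightly more elementary variant of the paper's argument, which passes through faithful states $(tr\otimes\varphi)(1-p)=0$; either works. The classical reduction via Theorem 1.4 is also fine, though the paper treats the three columns uniformly through the algebra presentations of Proposition 1.2 rather than invoking Theorem 1.4 separately.

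The step that does not work is your resolution of the one-sided versus two-sided orthogonality gap. Invoking Woronowicz's framework to conclude ``the fundamental corepresentation is automatically invertible; this forces $u^t u = 1$'' is circular: to have a compact matrix quantum group in Woronowicz's sense one must already know that both $u$ and $\bar u$ are invertible, which is precisely what you are trying to establish. Having $\Delta$ well-defined and $uu^t=1$ gives a coisometric fundamental matrix, not a corepresentation, and no cancellation axiom is in sight.

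The gap closes much more simply, and this is what the paper does. Once $uu^t=1$ is known, $u^t u$ is \emph{automatically} a self-adjoint idempotent: $(u^tu)^2 = u^t(uu^t)u = u^tu$, and $(u^tu)^* = u^t u$ since $u^* = u^t$ in the orthogonal/self-adjoint case. Its trace equals $\sum_{ij} u_{ij}^2 = Tr(uu^t) = N$. Thus $u^tu$ is again a projection of trace $N$, and your own lemma gives $u^tu=1$. In the unitary case, $uu^*$ and $\bar u u^t$ projection of trace $N$ force $uu^*=\bar u u^t=1$; then $u^*u$ and $u^t\bar u$ are projections (the paper phrases this via the standard fact that $aa^*$ a projection implies $a^*a$ a projection) of traces $Tr(u^*u)=Tr(\bar u u^t)=N$ and $Tr(u^t\bar u)=Tr(uu^*)=N$, and the lemma applies again. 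No quantum group structure is needed at this point; it is pure $C^*$-algebra. The twisted columns indeed follow verbatim, as you say.
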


\begin{proof}
We recall from \cite{ba1} that the various quantum groups are constructed as follows, with the symbol $\times$ standing once again for ``commutative'', ``twisted'' and ``free'':
\begin{eqnarray*}
C(O_N^\times)&=&C^*_\times\left((u_{ij})_{i,j=1,\ldots,N}\Big|u=\bar{u},uu^t=u^tu=1\right)\\
C(U_N^\times)&=&C^*_\times\left((u_{ij})_{i,j=1,\ldots,N}\Big|uu^*=u^*u=1,\bar{u}u^t=u^t\bar{u}=1\right)
\end{eqnarray*}

On the other hand, according to Proposition 1.2 and to Definition 2.1 above, we have the following presentation results:
\begin{eqnarray*}
C(O_{NN}^{N\times})&=&C^*_\times\left((u_{ij})_{i,j=1,\ldots,N}\Big|u=\bar{u},uu^t={\rm projection\ of\ trace}\ N\right)\\
C(U_{NN}^{N\times})&=&C^*_\times\left((u_{ij})_{i,j=1,\ldots,N}\Big|uu^*,\bar{u}u^t={\rm projections\ of\ trace}\ N\right)
\end{eqnarray*}

We use now the standard fact that if $p=aa^*$ is a projection then $q=a^*a$ is a projection too. Together with $Tr(uu^*)=Tr(u^t\bar{u})$ and $Tr(\bar{u}u^t)=Tr(u^*u)$, this gives:
\begin{eqnarray*}
C(O_{NN}^{N\times})&=&C^*_\times\left((u_{ij})_{i,j=1,\ldots,N}\Big|u=\bar{u},\ uu^t,u^tu={\rm projections\ of\ trace}\ N\right)\\
C(U_{NN}^{N\times})&=&C^*_\times\left((u_{ij})_{i,j=1,\ldots,N}\Big|uu^*,u^*u,\bar{u}u^t,u^t\bar{u}={\rm projections\ of\ trace}\ N\right)
\end{eqnarray*}

Now observe that, in tensor product notation, and by using the normalized trace, the conditions at right are all of the form $(tr\otimes id)p=1$, with $p=uu^*,u^*u,\bar{u}u^t,u^t\bar{u}$. We therefore obtain $(tr\otimes\varphi)(1-p)=0$ for any faithful state $\varphi$, and it follows that the projections $p=uu^*,u^*u,\bar{u}u^t,u^t\bar{u}$ must be all equal to the identity, as desired.
\end{proof}

Regarding now the homogeneous space structure of $O_{MN}^{L\times},U_{MN}^{L\times}$, the situation here is more complicated in the twisted and free cases than in the classical case. See \cite{ba1}, \cite{bss}.

The classical results have, however, some partial extensions. In order to formulate a result, we use the standard coaction formalism for the compact quantum groups, as in \cite{bss}. Also, given two noncommutative compact spaces $X,Y$, we define their product $X\times Y$ via the formula $C(X\times Y)=C(X)\otimes C(Y)$, with the tensor product being the maximal one. Finally, we use the standard fact that when $G,H$ are compact matrix quantum groups, then so is their product $G\times H$. For more on these topics, see \cite{bss}, \cite{ntu}.

With these conventions, we have the following result:

\begin{proposition}
The spaces $U_{MN}^{L\times}$ have the following properties:
\begin{enumerate}
\item We have an action $U_M^\times\times U_N^\times\curvearrowright U_{MN}^{L\times}$, given by $u_{ij}\to\sum_{kl}a_{ik}\otimes b_{jl}^*\otimes u_{kl}$.

\item We have a map $U_M^\times\times U_N^\times\to U_{MN}^{L\times}$, given by $u_{ij}\to\sum_{l\leq L}a_{il}\otimes b_{jl}^*$.
\end{enumerate}
Similar results hold for the spaces $O_{MN}^{L\times}$, with all the $*$ exponents removed.
\end{proposition}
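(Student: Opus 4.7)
The strategy is to verify that each stated formula defines a $*$-morphism of $C^*$-algebras by checking that the elements $u'_{ij}$ it produces satisfy the defining relations of $C(U_{MN}^{L\times})$: namely that $u'(u')^*$ and $\bar{u}'(u')^t$ are projections of trace $L$, together with $u'=\bar{u}'$ in the orthogonal case, and the $\pm$-commutation relations in the twisted case.

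For part (1), I would set $u'_{ij}:=\sum_{kl}a_{ik}\otimes b_{jl}^*\otimes u_{kl}$ and compute $(u'(u')^*)_{ii'}$ directly. The sum over $j$ collapses via $\sum_j b_{jl}^*b_{jl'}=(b^*b)_{ll'}=\delta_{ll'}$, leaving $\sum_{kk'}a_{ik}a_{i'k'}^*\otimes 1\otimes(uu^*)_{kk'}$. Idempotency then follows from $a^*a=1$ together with $(uu^*)^2=uu^*$, while the trace equals $L$ via $\sum_i a_{ik}a_{ik'}^*=(a^t\bar{a})_{kk'}=\delta_{kk'}$ combined with $\mathrm{Tr}(uu^*)=L$. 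The relation for $\bar{u}'(u')^t$ follows by identical bookkeeping, with the roles of $a$ and $b$ swapped. In the orthogonal case the formula is stated without the $*$ on $b_{jl}$, and $u'=\bar{u}'$ is then automatic from $a=\bar{a}$, $b=\bar{b}$, $u=\bar{u}$.

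The expected main obstacle is the twisted case. For each pair $(i,j),(i',j')$ lying on the same row or column of $u'$, one must expand $u'_{ij}u'_{i'j'}$ against $u'_{i'j'}u'_{ij}$ and track the sign $\varepsilon_a\varepsilon_b\varepsilon_u$ produced by commuting $a_{ik}a_{i'k'}$, $b_{jl}^*b_{j'l'}^*$, and $u_{kl}u_{k'l'}$ past each other in their respective tensor slots. A four-case analysis on whether $k=k'$ and whether $l=l'$ shows that these three signs always multiply to $-1$ precisely when $(i,j)$ and $(i',j')$ share a row or column, and to $+1$ otherwise. Thus the twisting relations on $u'$ are exactly the required ones, and this sign bookkeeping is the crux of the argument.

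For part (2), I would observe that the formula $u_{ij}\mapsto\sum_{l\leq L}a_{il}\otimes b_{jl}^*$ is the composition of the action in (1) with the evaluation $u_{kl}\mapsto\delta_{kl}$ for $k\leq L$ and $u_{kl}\mapsto 0$ otherwise, corresponding to the classical point $U_0=(^1_0{\ }^0_0)\in U_{MN}^L$. It therefore suffices to check that this evaluation extends to a $*$-morphism $C(U_{MN}^{L\times})\to\mathbb{C}$. The projection and orthogonality conditions at $U_0$ are classical, and in the twisted case the anticommutation relations are vacuously satisfied, since the nonzero entries of $U_0$ all lie on the diagonal and so no two of them share a row or column. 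The orthogonal statements are identical, with all $*$'s removed.
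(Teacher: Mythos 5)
Your proposal is correct, and for part (1) it tracks the paper's own argument quite closely. The only cosmetic difference there is that you use the idempotency $(uu^*)^2=uu^*$ of the range projection, whereas the paper works with the equivalent partial-isometry identity $uu^*u=u$ and verifies $UU^*U=U$ directly; in a $C^*$-algebra these are the same condition, so this is a matter of taste. Your sign cancellation in the twisted case is exactly the paper's mechanism as well: the two ``internal'' Kronecker signs $(-1)^{\delta_{km}},(-1)^{\delta_{ln}}$ coming from commuting the $u$-slot each appear once more from the $a$- and $b$-slots, so the total sign collapses to $(-1)^{\delta_{ip}+\delta_{jq}}$ without any genuine case split being needed.

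Where you diverge, usefully, is part (2). The paper repeats the entire verification from scratch for the map $V_{ij}=\sum_{l\leq L}a_{il}\otimes b_{jl}^*$: it recomputes $VV^*V=V$, the trace condition, and the anticommutation signs, with computations structurally parallel to those already done for $U_{ij}$. You instead factor the quotient map as the action map followed by evaluation at the classical base point $U_0=\bigl(\begin{smallmatrix}1&0\\0&0\end{smallmatrix}\bigr)$, and then only have to observe that $\mathrm{ev}_{U_0}:C(U_{MN}^{L\times})\to\mathbb C$ is well defined because $U_0$ is an honest partial isometry of rank $L$ with at most one nonzero entry per row and column, making the anticommutation relations vacuous. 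This eliminates the redundant calculation, makes visible the classical mechanism $p(g)=a(g,eH)$ that Theorem 2.5 later formalizes, and is in my view the cleaner route. Both proofs are correct; yours buys economy and conceptual transparency in (2), at no cost in (1).
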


\begin{proof}
In the classical case, the transpose of the action map $U_M\times U_N\curvearrowright U_{MN}^L$ and of the quotient map $U_M\times U_N\to U_{MN}^L$ are as follows, where $J=(^1_0{\ }^0_0)$:
\begin{eqnarray*}
\varphi&\to&((A,B,U)\to\varphi(AUB^*))\\
\varphi&\to&((A,B)\to\varphi(AJB^*))
\end{eqnarray*}

But with $\varphi=u_{ij}$ we obtain precisely the formul\ae\  in the statement. The proof in the orthogonal case is similar. Regarding now the free case, the proof goes as follows:

(1) Assuming $uu^*u=u$, with $U_{ij}=\sum_{kl}a_{ik}\otimes b_{jl}^*\otimes u_{kl}$ we have:
\begin{eqnarray*}
(UU^*U)_{ij}
&=&\sum_{pq}\sum_{klmnst}a_{ik}a_{qm}^*a_{qs}\otimes b_{pl}^*b_{pn}b_{jt}^*\otimes u_{kl}u_{mn}^*u_{st}\\
&=&\sum_{klmt}a_{ik}\otimes b_{jt}^*\otimes u_{kl}u_{ml}^*u_{mt}
=\sum_{kt}a_{ik}\otimes b_{jt}^*\otimes u_{kt}=U_{ij}
\end{eqnarray*}

Also, assuming that we have $\sum_{ij}u_{ij}u_{ij}^*=L$, we obtain:
$$\sum_{ij}U_{ij}U_{ij}^*
=\sum_{ij}\sum_{klst}a_{ik}a_{is}^*\otimes b_{jl}^*b_{jt}\otimes u_{kl}u_{st}^*
=\sum_{kl}1\otimes1\otimes u_{kl}u_{kl}^*=L$$

(2) Assuming $uu^*u=u$, with $V_{ij}=\sum_{l\leq L}a_{il}\otimes b_{jl}^*$ we have:
$$(VV^*V)_{ij}=\sum_{pq}\sum_{x,y,z\leq L}a_{ix}a_{qy}^*a_{qz}\otimes b_{px}^*b_{py}b_{jz}^*=\sum_{x\leq L}a_{ix}\otimes b_{jx}^*=V_{ij}$$

Also, assuming that we have $\sum_{ij}u_{ij}u_{ij}^*=L$, we obtain:
$$\sum_{ij}V_{ij}V_{ij}^*=\sum_{ij}\sum_{l,s\leq L}a_{il}a_{is}^*\otimes b_{jl}^*b_{js}=\sum_{l\leq L}1=L$$

By removing all the $*$ exponents, we obtain as well the orthogonal results.

In the twisted case the proof is similar. Let us first discuss the orthogonal case. The twisting relations can be written as follows:
$$u_{ij}u_{pq}=(-1)^{\delta_{ip}+\delta_{jq}}u_{pq}u_{ij}$$

With this formula in hand, the verification of the extra relations goes as follows:

(1) With $U_{ij}=\sum_{kl}a_{ik}\otimes b_{jl}\otimes u_{kl}$ we have, as desired:
\begin{eqnarray*}
U_{ij}U_{pq}
&=&\sum_{klmn}a_{ik}a_{pm}\otimes b_{jl}b_{qn}\otimes u_{kl}u_{mn}\\
&=&\sum_{klmn}(-1)^{\delta_{ip}+\delta_{km}}a_{pm}a_{ik}\otimes(-1)^{\delta_{jq}+\delta_{ln}}b_{qn}b_{jl}\otimes(-1)^{\delta_{km}+\delta_{ln}}u_{mn}u_{kl}\\
&=&\sum_{klmn}(-1)^{\delta_{ip}+\delta_{jq}}a_{pm}a_{ik}\otimes b_{qn}b_{jl}\otimes u_{mn}u_{kl}
=(-1)^{\delta_{ip}+\delta_{jq}}U_{pq}U_{ij}
\end{eqnarray*}

(2) With $V_{ij}=\sum_{l\leq L}a_{il}\otimes b_{jl}$ we have as well, as desired:
\begin{eqnarray*}
V_{ij}V_{pq}
&=&\sum_{l,m\leq L}a_{il}a_{pm}\otimes b_{jl}b_{qm}
=\sum_{l,m\leq L}(-1)^{\delta_{ip}+\delta_{lm}}a_{pm}a_{il}\otimes(-1)^{\delta_{jq}+\delta_{lm}}b_{qm}b_{jl}\\
&=&\sum_{l,m\leq L}(-1)^{\delta_{ip}+\delta_{jq}}a_{pm}a_{il}\otimes b_{qm}b_{jl}=(-1)^{\delta_{ip}+\delta_{jq}}V_{pq}V_{ij}
\end{eqnarray*}

The proof in the unitary case is similar, by adding $*$ exponents where needed.
\end{proof}

Let us examine now the relation between the above maps. In the classical case, given a quotient space $X=G/H$, the associated action and quotient maps are given by:
$$\begin{cases}
a:G\times X\to X&:\quad (g,g'H)\to gg'H\\
p:G\to X&:\quad g\to gH
\end{cases}$$

Thus we have $a(g,p(g'))=p(gg')$. In our context, a similar result holds: 

\begin{theorem}
With $G=G_M\times G_N$ and $X=G_{MN}^L$, where $G_N=O_N^\times,U_N^\times$, we have
$$\xymatrix@R=12mm@C=20mm{
G\times G\ar[r]^m\ar[d]_{id\times p}&G\ar[d]^p\\
G\times X\ar[r]^a&X
}$$
where $a,p$ are the action map and the map constructed in Proposition 2.4.
\end{theorem}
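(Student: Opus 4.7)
The plan is to pass to the dual picture: reversing arrows turns the required commutativity into the identity of $*$-homomorphisms $\Delta \circ p^* = (\mathrm{id} \otimes p^*) \circ a^*$ as maps $C(X) \to C(G) \otimes C(G)$, where $\Delta = m^*$ is the comultiplication on $C(G) = C(G_M) \otimes C(G_N)$ coming from the product quantum group structure. Since both sides are $*$-homomorphisms out of the universal algebra $C(X)$, it suffices to check this identity on the generators $u_{ij}$ of $C(X)$, using the explicit formulas for $a^*$ and $p^*$ recorded in Proposition 2.4.

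For the left hand side, I would start from $p^*(u_{ij}) = \sum_{l \leq L} a_{il} \otimes b_{jl}^*$ and apply $\Delta$, expanding via the standard coproduct formulas $\Delta(a_{il}) = \sum_k a_{ik} \otimes a_{kl}$ and $\Delta(b_{jl}^*) = \sum_m b_{jm}^* \otimes b_{ml}^*$; since the $a$'s and $b$'s live in the two commuting tensor factors of $C(G)$, the result collapses to a sum over $k, m$ and $l \leq L$ of terms $a_{ik} b_{jm}^* \otimes a_{kl} b_{ml}^*$. For the right hand side, I would start from $a^*(u_{ij}) = \sum_{k,l} a_{ik} \otimes b_{jl}^* \otimes u_{kl}$ and substitute $p^*(u_{kl}) = \sum_{n \leq L} a_{kn} \otimes b_{ln}^*$ into the last factor, producing a sum over $k, l$ and $n \leq L$ of terms $a_{ik} b_{jl}^* \otimes a_{kn} b_{ln}^*$. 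After the relabeling $(l, m) \leftrightarrow (n, l)$ the two expressions coincide.

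The essential observation is that this computation uses only Proposition 2.4 together with the defining coproducts of $G_M$ and $G_N$, and never invokes any commutation or anticommutation relation between generators. Consequently the same proof applies verbatim to the commutative, twisted and free cases, and, by dropping all $*$ exponents, to the orthogonal variants as well. The closest thing to an obstacle is purely bookkeeping: one must keep careful track of which tensor slot each generator occupies under the identification $C(G_M \times G_N) = C(G_M) \otimes C(G_N)$, and remember that $\Delta$ on this product decouples into the two componentwise comultiplications together with a swap of the middle tensor factors. Once the conventions are fixed, the equality is a one-line manipulation.
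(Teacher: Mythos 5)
Your proof is correct and follows essentially the same route as the paper: dualize the diagram, check the identity $\Delta\circ p^*=(\mathrm{id}\otimes p^*)\circ a^*$ on the generators $u_{ij}$ using the explicit formulas from Proposition 2.4, and observe that the two sides agree after relabeling summation indices. The paper writes out the flip $F_{23}$ on the middle tensor legs explicitly rather than absorbing it into product notation, but the computation is the same.
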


\begin{proof}
At the level of the associated algebras of functions, we must prove that the following diagram commutes, where $\Phi,\pi$ are morphisms of algebras induced by $a,p$:
$$\xymatrix@R=12mm@C=20mm{
C(X)\ar[r]^\Phi\ar[d]_\pi&C(G\times X)\ar[d]^{id\otimes\pi}\\
C(G)\ar[r]^\Delta&C(G\times G)
}$$

When going right, and then down, the composition is as follows:
$$(id\otimes\pi)\Phi(u_{ij})
=(id\otimes\pi)\sum_{kl}a_{ik}\otimes b_{jl}^*\otimes u_{kl}
=\sum_{kl}\sum_{s\leq L}a_{ik}\otimes b_{jl}^*\otimes a_{ks}\otimes b_{ls}^*$$

On the other hand, when going down, and then right, the composition is as follows, where $F_{23}$ is the flip between the second and the third components:
$$\Delta\pi(u_{ij})
=F_{23}(\Delta\otimes\Delta)\sum_{s\leq L}a_{is}\otimes b_{js}^*\\
=F_{23}\left(\sum_{s\leq L}\sum_{kl}a_{ik}\otimes a_{ks}\otimes b_{jl}^*\otimes b_{ls}^*\right)$$

Thus the above diagram commutes indeed, and this gives the result.
\end{proof}

In general, going beyond Theorem 2.5 leads to some non-trivial questions. A first issue comes from the fact that the inclusions $G_L\times G_{M-L}\times G_{N-L}\subset G_M\times G_N$ are not well-defined, in the free case. There are as well some analytic issues, coming from the fact that the maps in Proposition 2.4 (2) are in general not surjective. See \cite{ba1}, \cite{bss}.

\section{Uniform quantum groups}

We discuss in this section a generalization of the above constructions. For this purpose, we first need to axiomatize a suitable class of compact quantum groups, generalizing the classical groups $O_N,U_N$, their twists $\bar{O}_N,\bar{U}_N$, and their free versions $O_N^+,U_N^+$.

Let $P(k,l)$ the set of partitions between an upper row of $k$ points, and a lower row of $l$ points, with each leg colored black or white, and with $k,l$ standing for the corresponding ``colored integers''. We have the following notion, which appears as a straightforward generalization of the corresponding orthogonal notion from \cite{bsp}:

\begin{definition}
A category of partitions is a collection of sets $D=\bigcup_{kl}D(k,l)$, with $D(k,l)\subset P(k,l)$, which contains the identity, and is stable under:
\begin{enumerate}
\item The horizontal concatenation operation $\otimes$.

\item The vertical concatenation $\circ$, after deleting closed strings in the middle.

\item The upside-down turning operation $*$ (with reversing of the colors).
\end{enumerate}
\end{definition}

Here the vertical concatenation operation assumes of course that the colors match. Regarding the identity, the precise condition is that $D(\circ,\circ)$ contains the ``white'' identity $|^{\hskip-1.3mm\circ}_{\hskip-1.3mm\circ}$\,. By using (3) we see that $D(\bullet,\bullet)$ contains the ``black'' identity $|^{\hskip-1.3mm\bullet}_{\hskip-1.3mm\bullet}$\,, and then by using (1) we see that each $D(k,k)$ contains its corresponding (colored) identity.

The basic example of such a category is $P$ itself. Yet another basic example is the category $NC$ of noncrossing partitions. There are of course many other examples. We refer to \cite{bsp} for the uncolored case, and to \cite{tw1}, \cite{tw2} for the general colored case.

As explained in \cite{bsp}, \cite{mal}, \cite{tw2}, such categories produce quantum groups. In this paper, however, we will need a modification of this construction, in order to cover as well the twists. We use for this purpose a number of findings from our recent papers \cite{ba1}, \cite{ba3}.

As explained in \cite{ba1}, \cite{ba3}, in order to cover the twists of the quantum groups in \cite{bsp} we must restrict attention to the categories $D\subset P_{even}$, where $P_{even}\subset P$ is the category of partitions having all blocks of even size. Such partitions act on tensors, as follows:

\begin{definition}
Associated to any $\pi\in P_{even}(k,l)$ are the linear maps
\begin{eqnarray*}
T_\pi(e_{i_1}\otimes\ldots\otimes e_{i_k})&=&\sum_{j:\ker(^i_j)\leq\pi}e_{j_1}\otimes\ldots\otimes e_{j_l}\\
\bar{T}_\pi(e_{i_1}\otimes\ldots\otimes e_{i_k})&=&\sum_{\tau\leq\pi}\varepsilon(\tau)\sum_{j:\ker(^i_j)=\tau}e_{j_1}\otimes\ldots\otimes e_{j_l}
\end{eqnarray*}
where $\{e_i\}$ is the standard basis of $\mathbb C^N$, and $\varepsilon:P_{even}\to\{-1,1\}$ is the signature map.
\end{definition}

Here the kernel of a multi-index $(^i_j)=(^{i_1\ldots i_k}_{j_1\ldots j_l})$ is the partition obtained by joining the sets of equal indices. Thus, the condition $\ker(^i_j)\leq\pi$ simply tells us that the strings of $\pi$ must join equal indices. As for the signature map $\varepsilon:P_{even}\to\{-1,1\}$, this is a canonical extension of the usual signature map $\varepsilon:S_\infty\to\{-1,1\}$, constructed in \cite{ba1}.

Here are a few examples of such linear maps, taken from \cite{ba1}:
$$T_\cap(1)=\bar{T}_\cap(1)=\sum_ie_i\otimes e_i\quad,\quad
T_\cup(e_i\otimes e_j)=\bar{T}_\cup(e_i\otimes e_j)=\delta_{ij}$$
$$T_{\slash\!\!\!\backslash}(e_i\otimes e_j)=e_j\otimes e_i\quad,\quad
\bar{T}_{\slash\!\!\!\backslash}(e_i\otimes e_j)=
\begin{cases}
e_j\otimes e_i&{\rm for}\ i=j\\
-e_j\otimes e_i&{\rm for}\ i\neq j
\end{cases}$$

In general, $T_\pi,\bar{T}_\pi$ can be thought of as coming from a twisting parameter $q=\pm1$. As explained in \cite{ba1}, for $\pi\in NC_{even}$ we have $\tau\leq\pi\implies \varepsilon(\tau)=1$, and so $T_\pi=\bar{T}_\pi$. In general, however, the maps $T_\pi,\bar{T}_\pi$ are different. We refer to \cite{ba1} for full details here.

We have the following ``$q$-easiness'' notion, inspired from \cite{bsp}:

\begin{definition}
A compact quantum group $G\subset U_N^+$ is called quizzy when 
$$Hom(u^{\otimes k},u^{\otimes l})=span\left(\dot{T}_\pi\big|\pi\in D(k,l)\right)$$
for any colored integers $k,l$, for a certain category of partitions $D\subset P_{even}$.
\end{definition}

Here the dot stands for a fixed value of $q=\pm1$, with the maps $T_\pi$ being used at $q=1$, and with the maps $\bar{T}_\pi$ being used at $q=-1$. Also, the ``colored'' tensor powers $u^{\otimes k},u^{\otimes l}$ are defined by tensoring the corepresentations $u^\circ=u$ and $u^\bullet=\bar{u}$.

At the level of basic examples, we have the following result:

\begin{proposition}
The following are quizzy quantum groups, 
$$\xymatrix@R=15mm@C=15mm{
U_N\ar[r]&U_N^+&\bar{U}_N\ar@.[l]\\
O_N\ar[r]\ar[u]&O_N^+\ar[u]&\bar{O}_N\ar@.[l]\ar@.[u]}$$
with $q\in\{-1,1\}$ being by definition $1$ at left, $-1$ at right, and $\pm1$ in the middle.
\end{proposition}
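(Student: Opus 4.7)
The approach is a straightforward application of Tannakian duality, combined with the identification of the correct category of partitions in each case. For each of the six quantum groups $G$ in the diagram, I would exhibit a category $D\subset P_{even}$ and a twist parameter $q\in\{\pm1\}$ such that the Hom-spaces of the fundamental corepresentation are spanned by the corresponding maps $\dot T_\pi$. The candidates are as follows: for $O_N$ and $\bar O_N$ take $D=P_2$ (all pair partitions) with $q=+1$ and $q=-1$ respectively; for $O_N^+$ take $D=NC_2$ (noncrossing pair partitions), where $T_\pi=\bar T_\pi$ by the observation in the paragraph following Definition 3.2, so both values of $q$ give the same category; and in the unitary setting take the analogous categories $\mathcal{P}_2$, $\mathcal{NC}_2$, $\mathcal{P}_2$ of matching two-colored pair partitions.

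The work then splits into two pieces. First, one must verify that each of these collections is indeed a category of partitions in the sense of Definition 3.1, which is essentially combinatorial and well known. Second, one must check that the defining relations of $G$ correspond to the intertwining relations produced by the generators of the chosen category. Concretely, the relation $u=\bar u$ together with $uu^t=u^tu=1$ for orthogonal quantum groups is equivalent to the fact that $\dot T_\cap$ and $\dot T_\cup$ are intertwiners; the biunitarity relations for unitary quantum groups similarly correspond to the colored cap/cup maps; and the commutation (respectively anticommutation) relations, which distinguish the classical (respectively twisted) cases from the free ones, correspond to the intertwining property of $T_{\slash\!\!\!\backslash}$ (respectively $\bar T_{\slash\!\!\!\backslash}$), whose explicit form was recalled right after Definition 3.2.

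Once these generating intertwiners are in place, Woronowicz's Tannakian reconstruction theorem produces, starting from $D$ and the choice of $q$, a compact quantum group whose Hom-spaces are spanned precisely by the $\dot T_\pi$ with $\pi\in D$. It remains to identify this reconstructed object with $G$. In the classical case $q=+1$, $D=P_2$ or $\mathcal{P}_2$, this identification goes back to the classical Brauer-type theorems; in the free case it is the Tannakian picture of $O_N^+,U_N^+$ from Wang's work, as reformulated in the easy quantum group setting of \cite{bsp}. The twisted case requires the modified Tannakian machinery of \cite{ba1}, \cite{ba3}, where it is shown that the rule $\pi\mapsto\bar T_\pi$ is compatible with the categorical operations $\otimes$, $\circ$, $*$ precisely because $\varepsilon:P_{even}\to\{-1,1\}$ is a monoidal morphism, and that the reconstructed quantum group is exactly $\bar O_N$ or $\bar U_N$.

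The main obstacle, and the only nontrivial step, is the twisted reconstruction: one must confirm that the anticommutation relations $ab^\times=-b^\times a$ for entries on the same row or column really are captured by the $\bar T_\pi$ maps, and that no additional relations sneak in. This is where the signature computation for the basic crossing, $\bar T_{\slash\!\!\!\backslash}(e_i\otimes e_j)=\pm e_j\otimes e_i$ with sign depending on $i=j$ versus $i\neq j$, does the essential bookkeeping; once this is verified on the generators of $P_{even}$, the passage to arbitrary $\pi$ follows from the multiplicativity of $\varepsilon$ under $\otimes$ and $\circ$. The orthogonal-to-unitary step is then routine, obtained by adding $*$ exponents and using the colored version of the maps $T_\pi,\bar T_\pi$.
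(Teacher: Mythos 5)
Your proposal is correct and takes the same route as the paper: you identify exactly the same six categories $P_2,\mathcal P_2,NC_2,\mathcal{NC}_2$ with the same choices of $q$, and the Tannakian/Brauer reconstruction you outline is precisely the content of the references \cite{ba1}, \cite{bsp} that the paper's proof cites. The only difference is that you unpack the details which the paper delegates to those citations; there is no divergence in method.
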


\begin{proof}
As explained in \cite{ba1}, \cite{bsp}, the above quantum groups appear indeed from the following categories of partitions, with $q\in\{-1,1\}$ being as in the statement:
$$\xymatrix@R=16mm@C=15mm{
\mathcal P_2\ar[d]&\mathcal{NC}_2\ar[d]\ar@.[r]\ar[l]&\mathcal P_2\ar@.[d]\\
P_2&NC_2\ar[l]\ar@.[r]&P_2}$$

Here $P_2$ is the set of all pairings, $\mathcal P_2\subset P_2$ is the set of ``matching'' pairings, whose upper and lower strings connect $\circ-\bullet$, and whose through strings connect $\circ-\circ$ or $\bullet-\bullet$, and $NC_2,\mathcal{NC}_2$ are the corresponding subsets of noncrossing pairings. See \cite{ba1}, \cite{bsp}.
\end{proof}

Consider now the group $H_N^s=\mathbb Z_s\wr S_N$, with $s\in\{2,4,\ldots,\infty\}$, which consists of the permutation matrices $\sigma\in S_N$ with nonzero entries multiplied by elements of $\mathbb Z_s$. This group has a free analogue, $H_N^{s+}=\mathbb Z_s\wr_*S_N^+$, constructed in \cite{bb+}, as follows:
$$C(H_N^{s+})=C^*\left((u_{ij})_{i,j=1,\ldots,N}\Big|u_{ij}u_{ij}^*=u_{ij}^*u_{ij}=p_{ij}={\rm magic},u_{ij}^s=p_{ij}\right)$$

Here the ``magic'' condition states that the entries of $p=(p_{ij})$ are projections, summing up to 1 on each row and column, and the last condition,  $u_{ij}^s=p_{ij}$, disappears by definition at $s=\infty$. Observe that the classical version of $H_N^{s+}$ is indeed $H_N^s$. See \cite{bb+}.

The $s=2,\infty$ specializations of $H_N^s,H_N^{s+}$, denoted respectively $H_N,H_N^+$ and $K_N,K_N^+$, are the quantum groups in \cite{bbc}, and their complex analogues. We have:

\begin{proposition}
The following are quizzy quantum groups, at both $q=\pm1$,
$$\xymatrix@R=6mm@C=25mm{
K_N\ar[r]&K_N^+\\
H_N^s\ar[r]\ar[u]&H_N^{s+}\ar[u]\\
H_N\ar[r]\ar[u]&H_N^+\ar[u]}$$
where $H_N^s=\mathbb Z_s\wr S_N$, and where $H_N^{s+}=\mathbb Z_s\wr_*S_N^+$, with $s\in\{2,4,\ldots,\infty\}$. 
\end{proposition}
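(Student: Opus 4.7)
My plan is to reduce the claim, in each of the six cases, to classical easiness at $q=+1$, which is known from \cite{bb+}, \cite{bbc}, \cite{tw1}, \cite{tw2}: in each case there is an explicit category $D\subset P_{even}$ such that $\mathrm{Hom}(u^{\otimes k},u^{\otimes l}) = \mathrm{span}(T_\pi \mid \pi\in D(k,l))$. For the three quantum groups $H_N^+, H_N^{s+}, K_N^+$ in the right column the category $D$ lies in $NC_{even}$, while for $H_N, H_N^s, K_N$ in the left column it lies in $P_{even}$, with the block-size divisibility constraints corresponding respectively to $s=2$, general $s$, and the coloring convention for the complex reflection case. This establishes quizziness at $q=+1$ for all six entries, and reduces the problem to the verification at $q=-1$.

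\textbf{Free case.} For the right column, $D\subset NC_{even}$. By the remark recalled in the excerpt just before Definition 3.3, for every $\pi\in NC_{even}$ the only $\tau\leq\pi$ contributing to the defining sum for $\bar{T}_\pi$ has $\varepsilon(\tau)=+1$, and hence $T_\pi = \bar{T}_\pi$. Consequently the $q=-1$ statement coincides verbatim with the $q=+1$ statement already established.

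\textbf{Classical case.} For the left column, I will argue that the $q=-1$ twist $\bar G$ of each of $G\in\{H_N, H_N^s, K_N\}$ coincides with $G$ itself, and then conclude by Tannakian duality applied to the maps $\bar{T}_\pi$. The relations defining $H_N, H_N^s, K_N$ include the cubic hyperoctahedral vanishings $u_{ij}u_{ik} = u_{ji}u_{ki} = 0$ for $j\neq k$. In particular, for any pair $(i,j)\neq(k,l)$ lying on the same row or column of $u$, one has $u_{ij}u_{kl} = u_{kl}u_{ij} = 0$, so the twisting relation $u_{ij}u_{kl} = -u_{kl}u_{ij}$ is automatic. Since the remaining defining relations of $G$ already enforce full commutativity, this forces $\bar G = G$. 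But $\bar G$, built by imposing the twisting relations on top of the defining relations of $G$, is quizzy at $q=-1$ with the same category $D$, by the twisted easiness picture of \cite{ba1}, \cite{ba3}; hence so is $G$.

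\textbf{Main obstacle.} The delicate point is the identification $\bar G = G$, or equivalently that the Tannakian category generated by $\{\bar{T}_\pi \mid \pi\in D\}$ recovers exactly $\mathrm{Hom}(u^{\otimes k},u^{\otimes l})$ for the classical hyperoctahedral families. Combinatorially this amounts to showing that on every multi-index $(^i_j)$ for which the monomial $\prod_s u_{i_sj_s}$ is nonzero in $C(G)$, the partition $\tau=\ker(^i_j)\leq\pi$ satisfies $\varepsilon(\tau)=+1$; the cubic relations force precisely this, and the bookkeeping is the same one that underlies the triviality of the twist for hyperoctahedral-type quantum groups in \cite{ba1}, \cite{ba3}, to which one can ultimately appeal.
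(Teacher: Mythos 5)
Your proof is correct but takes a more explicit route than the paper's. The paper's proof of this proposition is essentially a citation: it lists the six associated categories $\mathcal P_{even}, P^s_{even}, P_{even}$ and $\mathcal{NC}_{even}, NC^s_{even}, NC_{even}$ and appeals to \cite{ba3}, \cite{bb+} for the fact that the corresponding quantum groups arise from them at both values of $q$. You instead isolate the mechanism. First you import easiness at $q=+1$ from the literature. For the free column you use the fact (recalled before Definition 3.3) that $\varepsilon(\tau)=1$ for all $\tau\le\pi$ when $\pi$ is noncrossing, hence $T_\pi=\bar T_\pi$, so the $q=-1$ assertion is identical to the $q=+1$ one. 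For the classical column you observe that the hyperoctahedral-type relations kill every product of two entries on a common row or column, so the anticommutation part of the twisting relations is vacuous, while the commutation part is subsumed by classical commutativity; hence the $C^*$-algebraic twist $\bar G$ coincides with $G$, and the twisted easiness picture of \cite{ba1}, \cite{ba3} (which identifies the $C^*$-twist with the quantum group whose Tannakian category is spanned by the $\bar T_\pi$) yields quizziness at $q=-1$. This is sound; note that the final Tannakian step is precisely the nontrivial content of \cite{ba1}, so both proofs ultimately rest on the same reference, but your version makes the reduction structure visible, which is a real gain in transparency. Two small phrasing slips, neither of which affects correctness: the vanishing relations $u_{ij}u_{ik}=u_{ji}u_{ki}=0$ are quadratic rather than ``cubic'', and in the free case the correct statement is that \emph{every} $\tau\le\pi$ has $\varepsilon(\tau)=+1$ (not merely ``the only contributing one'').
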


\begin{proof}
As explained in \cite{ba3}, \cite{bb+}, the above quantum groups appear indeed, with parameter $q=\pm1$ as in the statement, from the following categories of partitions:
$$\xymatrix@R=6mm@C=25mm{
\mathcal P_{even}\ar[d]&\mathcal{NC}_{even}\ar[l]\ar[d]\\
P_{even}^s\ar[d]&NC_{even}^s\ar[l]\ar[d]\\
P_{even}&NC_{even}\ar[l]}$$

Here $P_{even}^s\subset P_{even}$ is the set of partitions having the property that, in each block, the number of white legs equals the number of black legs, modulo $s$, with all legs counted with coefficient $+$ up, and $-$ down. At right we have the subset $NC_{even}^s=P_{even}^s\cap NC$, and the lower and upper objects are the corresponding specializations at $s=2,\infty$.
\end{proof}

We recall that the free complexification $(\widetilde{G},\widetilde{u})$ of a compact matrix quantum group $(G,u)$ is obtained by considering the subalgebra $C(\widetilde{G})\subset C(\mathbb T)*C(G)$ generated by the entries of $\widetilde{u}=zu$, where $z$ is the standard generator of $C(\mathbb T)$. See \cite{rau}.

With this convention, we have the following extra example, from \cite{tw2}:

\begin{proposition}
The quantum group $K_N^{++}=\widetilde{K}_N^+$ appears as an intermediate object
$$K_N^+\subset K_N^{++}\subset U_N^+$$
with both inclusions being proper, and is quizzy, equal to its own twist.
\end{proposition}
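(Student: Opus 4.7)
The plan is to verify the three assertions in order: first the inclusions $K_N^+\subset K_N^{++}\subset U_N^+$, then their properness, and finally quizziness together with the self-twist property.

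For the inclusions, I would argue directly from the definition of free complexification. The right-hand inclusion comes from biunitarity: the matrix $\widetilde{u}=zu$ is automatically biunitary inside $C(\mathbb{T})*C(K_N^+)$, since $z$ is a unitary scalar and $u$ is biunitary in $K_N^+\subset U_N^+$. Hence the universal property of $C(U_N^+)$ yields a surjection $C(U_N^+)\to C(K_N^{++})$, giving $K_N^{++}\subset U_N^+$. For the left-hand inclusion $K_N^+\subset K_N^{++}$, I would use the augmentation $z\mapsto 1$, which extends to a surjection $C(\mathbb{T})*C(K_N^+)\to C(K_N^+)$ and restricts to a surjection $C(K_N^{++})\to C(K_N^+)$ sending $\widetilde{u}$ to $u$.

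To see both inclusions are proper, I would pass to the Tannakian picture. The category associated with $K_N^+$ is $\mathcal{NC}_{even}$ by Proposition 3.5, while the free complexification operation corresponds, as shown in \cite{tw2}, to restricting to the subcategory $\widetilde{\mathcal{NC}}_{even}\subset\mathcal{NC}_{even}$ consisting of those $\pi$ whose upper and lower rows carry the same signed color count (white counted $+1$, black counted $-1$). I would then exhibit explicit separating partitions: the $4$-leg single-block partition joining two upper white points to two lower black points lies in $\mathcal{NC}_{even}$ but fails the signed-count condition, showing $K_N^+\subsetneq K_N^{++}$; conversely, the single-block partition with one upper white, one upper black, one lower white and one lower black leg belongs to $\widetilde{\mathcal{NC}}_{even}$ but is not a pairing, so it lies outside the category $\mathcal{NC}_2$ of $U_N^+$, witnessing $K_N^{++}\subsetneq U_N^+$.

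With the category $\widetilde{\mathcal{NC}}_{even}$ pinned down, quizziness of $K_N^{++}$ follows at once from Definition 3.3. For the self-twist property I would invoke the observation made just after Definition 3.2: for any $\pi\in NC_{even}$ one has $T_\pi=\bar{T}_\pi$, since every sub-partition $\tau\leq\pi$ has signature $+1$. Since $\widetilde{\mathcal{NC}}_{even}\subset NC_{even}$, the Tannakian spans at $q=+1$ and $q=-1$ coincide, and therefore so do the reconstructed quantum groups; that is, $K_N^{++}$ equals its own twist. The main obstacle I anticipate is the precise categorical description of $\widetilde{K}_N^+$; once this is granted from \cite{tw2}, the remaining steps reduce to routine partition combinatorics and the Tannakian machinery already developed in Section 3.
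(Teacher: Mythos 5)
Your argument for the two inclusions (biunitarity of $\widetilde{u}=zu$ for $K_N^{++}\subset U_N^+$; the counit $z\mapsto 1$ for $K_N^+\subset K_N^{++}$) and for the self-twist property (via $\mathcal{NC}_{even}^-\subset NC_{even}$ and the remark after Definition 3.2 that $T_\pi=\bar T_\pi$ for noncrossing $\pi$) matches the paper. The quizziness claim is, as in the paper, an appeal to \cite{tw2}.

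The properness argument, however, has a genuine gap coming from a misremembered description of the category. Tarrago--Weber's characterization, as quoted in the paper, is that $\mathcal{NC}_{even}^-$ consists of the noncrossing even partitions which, when rotated onto a single line, have \emph{alternating} colors within each block. Your condition --- that the upper and lower rows carry equal signed color counts $\#\circ-\#\bullet$ --- is a different and much weaker \emph{global} condition. In fact it is vacuous inside $\mathcal{NC}_{even}$: the defining per-block balance $(w_u-w_d)=(b_u-b_d)$, summed over blocks, gives $W_u-W_d=B_u-B_d$, i.e.\ $W_u-B_u=W_d-B_d$, which is exactly your global condition. So your $\widetilde{\mathcal{NC}}_{even}$ would coincide with all of $\mathcal{NC}_{even}$ and could not produce a proper inclusion. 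Concretely, your first witness (a single $4$-block with two white upper legs and two black lower legs) has $(w_u-w_d,b_u-b_d)=(2,-2)$, so it violates the per-block balance and is \emph{not} in $\mathcal{NC}_{even}$ at all; it separates nothing. A correct witness is the single $4$-block on one line colored $\circ\bullet\bullet\circ$: balanced per block, hence in $\mathcal{NC}_{even}$, but its cyclic color sequence does not alternate, hence it lies outside $\mathcal{NC}_{even}^-$. Your second witness (a $4$-block with colors $\circ,\bullet$ on top and $\circ,\bullet$ below) does happen to rotate to the alternating sequence $\circ\bullet\circ\bullet$, so it works, but this is an accident rather than a consequence of your stated criterion.

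Note also that the paper avoids the Tannakian description entirely for properness, via a short direct argument: inside $C(\mathbb T)*C(K_N^+)$ the elements $\widetilde{u}_{ij}\widetilde{u}_{ij}^*=zu_{ij}u_{ij}^*z^*$ and $\widetilde{u}_{ij}^*\widetilde{u}_{ij}=u_{ij}^*u_{ij}$ are projections, but they are visibly unequal. Since $K_N^+$ is defined by the relation $u_{ij}u_{ij}^*=u_{ij}^*u_{ij}$ while $K_N^{++}$ does not satisfy it, $K_N^+\subsetneq K_N^{++}$; and since the coordinates of $K_N^{++}$ satisfy ``$u_{ij}u_{ij}^*$ is a projection'' while those of $U_N^+$ do not, $K_N^{++}\subsetneq U_N^+$. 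This is cleaner and sidesteps the combinatorial pitfall above; if you want to keep the Tannakian route you should replace your category description and your first separating partition as indicated.
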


\begin{proof}
By composing the canonical inclusion $C(K_N^{++})\subset C(\mathbb T)*C(K_N^+)$ with $\varepsilon*id$ we obtain a morphism $C(K_N^{++})\to C(K_N^+)$ mapping $\widetilde{u}_{ij}\to u_{ij}$, so we have inclusions as in the statement. Since the elements $p_{ij}=\widetilde{u}_{ij}\widetilde{u}_{ij}^*=zu_{ij}u_{ij}^*z^*$ and $q_{ij}=\widetilde{u}_{ij}^*\widetilde{u}_{ij}=u_{ij}^*u_{ij}$ are projections, and are not equal, both these inclusions are proper.

Regarding the easiness claim, this follows from the general theory of the representations of free complexifications \cite{rau}. To be more precise, as explained in \cite{tw2}, the associated category $\mathcal{NC}_{even}^-$ is that of the even noncrossing partitions which, when rotated on one line, have alternating colors in each block. Observe that the inclusions in the statement correspond then to  the inclusions at the partition level, which are as follows:
$$\mathcal{NC}_{even}\supset\mathcal{NC}_{even}^-\supset\mathcal{NC}_2$$

Finally, since $\mathcal{NC}_{even}^-\subset NC_{even}$, the quantum group $K_N^{++}$ equals its own twist.
\end{proof}

The above examples are in fact the only ones that we are interested in, in the classical/twisted and free cases. In order to axiomatize these objects, we use:

\begin{proposition}
For a quizzy quantum group $H_N\subset G_N\subset U_N^+$, coming from a category of partitions $\mathcal{NC}_2\subset D\subset P_{even}$, the following are equivalent:
\begin{enumerate}
\item $D$ is stable by removing blocks.

\item $G_N\cap U_{N-M}^+=G_{N-M}$, for any $M\leq N$.
\end{enumerate}
If these conditions are satisfied, we call both $G=(G_N)$ and $D$ ``uniform''.
\end{proposition}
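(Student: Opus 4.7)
The plan is to invoke Tannakian duality: both $G_{N-M}$ and $G_N \cap U_{N-M}^+$ are compact quantum subgroups of $U_{N-M}^+$ with fundamental corepresentation the top-left block $v = (u_{ij})_{i,j \leq N-M}$, and their equality is detected at the level of Hom spaces for $v$.

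First, one inclusion is automatic. Let $P : \mathbb C^N \to \mathbb C^{N-M}$ be the orthogonal projection onto the first $N-M$ coordinates, with adjoint inclusion $P^\vee$. A direct computation from Definition 3.2 yields, for every $\pi \in D(k, l)$,
$$P^{\otimes l}\, \dot T_\pi^{(N)}\, (P^\vee)^{\otimes k} = \dot T_\pi^{(N-M)},$$
valid for both signs of $q = \pm 1$ (the signatures $\varepsilon(\tau)$ are insensitive to truncation of the index range). Since the left-hand side is the compression of a Hom map for $u$ in $G_N$, it is a Hom map for $v$ in $G_N \cap U_{N-M}^+$, so $\mathrm{Hom}(G_N \cap U_{N-M}^+) \supseteq \mathrm{Hom}(G_{N-M})$, which by Tannakian duality gives $G_N \cap U_{N-M}^+ \subseteq G_{N-M}$.

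The reverse inclusion is equivalent to the existence of a standard embedding $G_{N-M} \hookrightarrow G_N$, dually to the substitution $u \mapsto v \oplus 1_M$ extending to a $*$-algebra morphism $C(G_N) \to C(G_{N-M})$. This holds iff each defining relation $\dot T_\pi^{(N)} u^{\otimes k} = u^{\otimes l} \dot T_\pi^{(N)}$ with $\pi \in D$ survives the substitution. The central computation is entrywise: using $u_{ab} = \delta_{ab}$ whenever $a > N-M$ or $b > N-M$, the only nonzero contributions at a multi-index $(m, i)$ come from those configurations in which every block of $\pi$ is either entirely \emph{small} (indices in $\{1, \ldots, N-M\}$) or entirely \emph{large} (indices $> N-M$, forced equal within each block). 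The large blocks factor out as Kronecker delta symbols (with a compatible signature factor in the twisted case, by multiplicativity of $\varepsilon$ across components), leaving precisely the relation for the subpartition $\pi'$ of $\pi$ consisting of the small blocks, evaluated on $v$.

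Both implications now follow. For (1)$\Rightarrow$(2): stability under removing blocks implies each such $\pi' \in D$, so its $v$-relation holds in $C(G_{N-M})$ by quizziness, the substitution is well-defined, and combined with the first step this yields equality. For (2)$\Rightarrow$(1): given $\pi \in D$ and any block $B$ of $\pi$, choose multi-indices placing a single common large value at exactly the positions of $B$ and small values elsewhere; the surviving equation is the $\pi \setminus B$ relation on $v$, so $\dot T_{\pi \setminus B}^{(N-M)} \in \mathrm{span}(\dot T_\sigma^{(N-M)} : \sigma \in D)$, and taking $N-M$ large enough that these maps are linearly independent for partitions of the relevant fixed size forces $\pi \setminus B \in D$. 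The main technical obstacle is the entrywise bookkeeping, especially for blocks of $\pi$ entirely on the upper row (where the \emph{free} inner sum must itself be split into small and large contributions) and in verifying the signature multiplicativity from the construction of $\varepsilon$ in \cite{ba1}.
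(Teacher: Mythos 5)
Your proof is correct and takes essentially the same approach as the paper. The paper's argument rests on exactly the same subpartition equivalence for $\tilde v=\mathrm{diag}(v,1_M)$ (namely $T_\pi\in Hom(\tilde v^{\otimes k},\tilde v^{\otimes l})\iff T_{\pi'}\in Hom(v^{\otimes k'},v^{\otimes l'})$ for all $\pi'\subset\pi$), defers the entrywise bookkeeping you flag to \cite{bss}, and packages the conclusion as $G_N\cap U_{N-M}^+=G'_{N-M}$ with $D'$ the subpartition-closure of $D$; your two-inclusion organization (compression gives the automatic inclusion, the substitution test gives the reverse) is an equivalent presentation of the same computation.
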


\begin{proof}
This was proved in \cite{bss} in the orthogonal case, and the proof in general is similar. Assume that we have a subgroup $K\subset U_{N-M}^+$, with fundamental representation $v$, and consider the $N\times N$ matrix $\tilde{v}=diag(v,1_M)$. Then, for any $\pi\in P_{even}$, we have:
$$T_\pi\in Hom(\tilde{v}^{\otimes k},\tilde{v}^{\otimes l})\iff T_{\pi'}\in Hom(v^{\otimes k'},v^{\otimes l'}),\,\forall\pi'\subset\pi$$

With this formula in hand, we deduce that given a subgroup $G\subset U_N^+$, with fundamental representation denoted $u$, the algebra of functions on $K=G\cap U_{N-M}^+$ is given by:
$$C(K)=C(U_{N-M}^+)\Big/\left<T\in Hom(\tilde{v}^{\otimes k},\tilde{v}^{\otimes l}),\,\forall T\in Hom(u^{\otimes k},u^{\otimes l})\right>$$

Thus, we have $G_N\cap U_{N-M}^+=G_{N-M}'$, where $G'=(G_N')$ is the easy quantum group associated to the category $D'$ generated by all the subpartitions of the partitions in $D$. In particular $G_N\cap U_{N-M}^+=G_{N-M}$ for any $M\leq N$ is equivalent to $D=D'$, as claimed.
\end{proof}

Observe that the quantum groups in Propositions 3.4, 3.5, 3.6 are all uniform. We have in fact the following result, where by ``classical/twisted'' and ``free'' we mean $\backslash\hskip-2.1mm/\in D$ and $D\subset NC_{even}$, where $D\subset P_{even}$ is the associated category of partitions:

\begin{theorem}
The uniform classical/twisted and free quantum groups are
$$\xymatrix@R=1mm@C=10mm{
&&U_N,\bar{U}_N\ar@/^/[drr]\\
K_N\ar[rr]\ar@/^/[urr]&&K_N^+\ar[r]&K_N^{++}\ar[r]&U_N^+\\
\\
H_N^s\ar[rr]\ar[uu]&&H_N^{s+}\ar[uu]\\
\\
H_N\ar[rr]\ar[uu]\ar@/_/[drr]&&H_N^+\ar[rr]\ar[uu]&&O_N^+\ar[uuuu]\\
&&O_N,\bar{O}_N\ar@/_/[urr]}$$
where $H_N^s=\mathbb Z_s\wr S_N$, $H_N^{s+}=\mathbb Z_s\wr_*S_N^+$, with $s\in\{2,4,\ldots,\infty\}$, and $K_N^{++}=\widetilde{K}_N^+$. 
\end{theorem}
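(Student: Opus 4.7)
The plan is to prove Theorem 3.8 in two stages: first verify that each of the listed quantum groups is indeed uniform, and then invoke (and refine) the known easy-quantum-group classification to show that no further examples exist.

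For the first stage I would apply the criterion from Proposition 3.7, namely that uniformity is equivalent to stability of the associated category $D$ under removal of blocks. The relevant categories have already been identified in Propositions 3.4, 3.5 and 3.6, so the verification is a direct inspection: $P_2,\mathcal{P}_2,NC_2,\mathcal{NC}_2,P_{even},\mathcal{P}_{even},NC_{even},\mathcal{NC}_{even}$ are manifestly closed under block removal; for $P_{even}^s$ and $NC_{even}^s$ the defining balance condition ``$\#\text{white}-\#\text{black}\equiv 0\,(\mathrm{mod}\,s)$ in each block'' is a local condition on blocks, so deleting a block preserves it; and for $\mathcal{NC}_{even}^-$ the alternating-color condition after rotation on a single line is likewise a per-block condition. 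This stage is routine.

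For the second stage I would combine the classification of easy categories of partitions in the range $\mathcal{NC}_2\subset D\subset P_{even}$, split into classical/twisted (i.e.\ $\backslash\!\!\!/\in D$) and free (i.e.\ $D\subset NC_{even}$). In the classical/twisted orthogonal case, the intermediate categories between $P_2$ and $P_{even}$ are well known from \cite{bsp}: they are $P_2\subset\mathcal{P}_2\subset\mathcal{P}_{even}\subset P_{even}$ (the ``$s$-parametrized'' colored refinements collapse in the absence of $*$). In the classical/twisted unitary case, one gets the extra layer $P_{even}^s$ from \cite{tw1}, \cite{tw2}, which is precisely what produces $K_N$ vs. $H_N^s$ vs. $H_N$. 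In the free case, the analogous classification is obtained from \cite{tw2}, \cite{rwe}, \cite{fre}, and includes the additional intermediate category $\mathcal{NC}_{even}^-$ producing $K_N^{++}$. The uniformity hypothesis then eliminates any category that is not closed under block removal, which is the standard way to cut the classification lists down to the objects appearing in the diagram.

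The genuine obstacle is the second stage. The full list of easy categories in the range $\mathcal{NC}_2\subset D\subset P_{even}$ is spread across several papers and was originally assembled under slightly different conventions (uncolored vs.\ colored, with or without the twisting parameter $q=\pm 1$). The main work is therefore bookkeeping: one must check that the ``twisted'' cases do not yield new categories beyond the ones inherited from the untwisted setting (since at the level of $D\subset P_{even}$ the twist only flips the sign $\dot T_\pi$, not the category), and one must confirm that $\mathcal{NC}_{even}^-$ is the unique free intermediate category between $\mathcal{NC}_2$ and $\mathcal{NC}_{even}$ that is uniform and not of ``$s$-type''. Once this careful matching is done, the diagram in the statement is exactly what remains.
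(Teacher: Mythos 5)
Your two-stage outline matches the paper's, and your stage one (checking that each listed category is stable under block removal, via Proposition 3.7) agrees with what the paper does, as does your observation that twisting does not change the category, so one may assume $q=1$. Where you genuinely diverge is in the converse: you propose to pull the \emph{full} list of colored easy categories $\mathcal{NC}_2\subset D\subset P_{even}$ from \cite{bsp}, \cite{tw1}, \cite{tw2}, \cite{rwe}, \cite{fre} and then filter by the uniformity condition. The paper instead exploits uniformity \emph{from the outset}: since $D$ is stable under block removal, it is determined by the set $B$ of labelled blocks in $D$ with no upper legs, and $B$ is stable under color switching; the paper then cases on whether $B$ consists of pairings only (yielding $P_2,\mathcal P_2$ in the classical case and $NC_2,\mathcal{NC}_2$ in the free case) or contains a block of size $\geq 4$ (yielding $P_{even}^s$, resp.\ $\mathcal{NC}_{even}^-,NC_{even}^s$, where $s$ is the length of the smallest $\circ\cdots\circ$ block). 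The paper's route is more self-contained and sidesteps the bookkeeping problem you correctly flag: it does not require a complete, already-assembled classification of all colored easy categories, which (at the time of writing) is not available in full. Your filter-from-classification approach would work in principle but rests on a heavier external input; the block argument buys a direct classification of the uniform ones without needing the non-uniform ones. A minor slip in your stage two: in the ``classical/twisted orthogonal'' paragraph you list $P_2\subset\mathcal P_2\subset\mathcal P_{even}\subset P_{even}$ as the known chain from \cite{bsp}, but $\mathcal P_2$ and $\mathcal P_{even}$ are colored (unitary) categories, so that sentence conflates the uncolored orthogonal list with the colored one; the paper avoids this by treating orthogonal and unitary uniformly in the colored formalism throughout.
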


\begin{proof}
This is a consequence of the recent classification results in \cite{tw1}, \cite{tw2}, the idea being as follows. First, the diagram in the statement being obtained by merging the examples in Propositions 3.4, 3.5, 3.6, all the above quantum groups are quizzy. 

The uniformity condition is clear as well, for each of the quantum groups under consideration. Finally, all these quantum groups are either classical/twisted or free.

In order to prove now the converse, in view of the twisting results in \cite{ba2}, it is enough to deal with the $q=1$ case. So, consider a uniform category of partitions $D\subset P_{even}$, as in Proposition 3.7. We must prove that in the classical/free cases, the solutions are:
$$\xymatrix@R=1.8mm@C=10mm{
&&\mathcal P_2\ar@/_/[dll]\\
\mathcal P_{even}\ar[dd]&&\mathcal{NC}_{even}\ar[ll]\ar[dd]&\mathcal{NC}_{even}^-\ar[l]&\mathcal{NC}_2\ar[l]\ar@/_/[ull]\ar[dddd]\\
\\
P_{even}^s\ar[dd]&&NC_{even}^s\ar[ll]\ar[dd]\\
\\
P_{even}&&NC_{even}\ar[ll]&&NC_2\ar@/^/[dll]\ar[ll]\\
&&P_2\ar@/^/[ull]}$$

To be more precise, in the classical case, where $\backslash\hskip-2.1mm/\in D$, we must prove that the only solutions are the categories $P_2,\mathcal P_2,P_{even}^s$, and that in the free case, where $D\subset NC_{even}$, we must prove that the only solutions are the categories $NC_2,\mathcal{NC}_2,\mathcal{NC}_{even}^-,NC_{even}^s$.

We jointly investigate these two problems. Let $B$ be the set of all possible labelled blocks in $D$, having no upper legs. Observe that $B$ is stable under the switching of colors operation, $\circ\leftrightarrow\bullet$. We have two possible situations, as follows:

(1) $B$ consists of pairings only. Here the pairings in question can be either all labelled pairings, namely $\circ-\circ$, $\circ-\bullet$, $\bullet-\circ$, $\bullet-\bullet$, or just the matching ones, namely $\circ-\bullet$, $\bullet-\circ$, and we obtain here $P_2,\mathcal P_2$ in the classical case, and $NC_2,\mathcal{NC}_2$ in the free case.

(2) $B$ has at least one block of size $\geq 4$. In this case we can let $s\in\{2,4,\ldots,\infty\}$ to be the length of the smallest $\circ\ldots\circ$ block, and we obtain in this way the category $P_{even}^s$ in the classical case, and the categories $\mathcal{NC}_{even}^-,NC_{even}^s$ in the free case. See \cite{tw1}.
\end{proof}

The above occurrence of $K_N^{++}$ is quite an issue, and reminds the ``forgotten series'' from the orthogonal case \cite{bsp}, found later on, in \cite{web}. Note that this forgotten series has disappeared in the present setting, due to the uniformity axiom. In the unitary case, however, it is quite unclear on how to add an extra axiom, as to avoid $K_N^{++}$.

In what follows we will rather ignore $K_N^{++}$, and focus instead on $K_N^+$, known since the Bercovici-Pata computations in \cite{bb+} to be the ``correct'' liberation of $K_N$.

In general now, the full classification of the uniform quizzy quantum groups remains an open problem. For the ongoing program here, we refer to \cite{fre}, \cite{rwe}, \cite{tw1}, \cite{tw2}.

\section{Homogeneous spaces}

In this section we associate noncommutative homogeneous spaces, as in sections 1-2 above, to the quantum groups considered in section 3. We will be mostly interested in the quantum groups from Theorem 3.8, so let us first discuss, with full details, the case of the quantum groups $H_N^s,H_N^{s+}$ appearing there. As in \cite{ba2}, we use:

\begin{definition}
Associated to any partial permutation, $\sigma:I\simeq J$ with $I\subset\{1,\ldots,N\}$ and $J\subset\{1,\ldots,M\}$, is the real/complex partial isometry
$$T_\sigma:span\left(e_i\Big|i\in I\right)\to span\left(e_j\Big|j\in J\right)$$
given on the standard basis elements by $T_\sigma(e_i)=e_{\sigma(i)}$.
\end{definition}

We denote by $S_{MN}^L$ the set of partial permutations $\sigma:I\simeq J$ as above, with range $I\subset\{1,\ldots,N\}$ and target $J\subset\{1,\ldots,M\}$, and with $L=|I|=|J|$. See \cite{ba2}.

In analogy with the decomposition result $H_N^s=\mathbb Z_s\wr S_N$, we have:

\begin{proposition}
The space of partial permutations signed by elements of $\mathbb Z_s$,
$$H_{MN}^{sL}=\left\{T(e_i)=w_ie_{\sigma(i)}\Big|\sigma\in S_{MN}^L,w_i\in\mathbb Z_s\right\}$$
is isomorphic to the quotient space $(H_M^s\times H_N^s)/(H_L^s\times H_{M-L}^s\times H_{N-L}^s)$.
\end{proposition}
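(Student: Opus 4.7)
The plan is to follow the template of Proposition 1.3 and Theorem 1.4, exploiting the fact that $H_N^s \subset U_N$ sits as the subgroup of generalized permutation matrices with nonzero entries in $\mathbb{Z}_s \subset \mathbb T$. So I would define an action
$$H_M^s \times H_N^s \curvearrowright H_{MN}^{sL}, \quad (A,B) U = A U B^*,$$
and then identify $H_{MN}^{sL}$ with the orbit of the distinguished element $U_0 = \begin{pmatrix} I_L & 0 \\ 0 & 0 \end{pmatrix}$, modulo its stabilizer.

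First I would check that this action is well-defined, i.e.\ that $AUB^*$ lies in $H_{MN}^{sL}$ whenever $U$ does: a product of a signed partial permutation of rank $L$ with generalized $\mathbb{Z}_s$-permutation matrices on either side is again a signed partial permutation of rank $L$, by an elementary entry-by-entry check. Next I would verify transitivity: given $U \in H_{MN}^{sL}$ corresponding to $\sigma : I \simeq J$ with weights $(w_i)_{i\in I}$, I pick $B \in H_N^s$ that sends $\{1,\ldots,L\}$ to $I$ with the phases $(w_i)$ built in, and $A \in H_M^s$ that sends $\{1,\ldots,L\}$ to $J$ via the permutation induced by $\sigma$; then $A U_0 B^* = U$.

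The stabilizer computation parallels the one in Proposition 1.3. The equation $A U_0 = U_0 B$, read in the block decomposition $A = \bigl(\begin{smallmatrix} x & * \\ * & a \end{smallmatrix}\bigr)$, $B = \bigl(\begin{smallmatrix} x & * \\ * & b \end{smallmatrix}\bigr)$, forces the off-diagonal blocks in the first column of $A$ and the first row of $B$ to vanish. Since $A, B$ are generalized permutation matrices (hence in particular unitary), it follows that $A$ and $B$ are block-diagonal, with a common upper-left block $x \in H_L^s$, and with independent complementary blocks $a \in H_{M-L}^s$ and $b \in H_{N-L}^s$. This identifies the stabilizer with $H_L^s \times H_{M-L}^s \times H_{N-L}^s$, and the statement follows.

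The main obstacle I anticipate is checking that the ``uniformity''-type constraint holds at the set-theoretic level, i.e.\ that the diagonal blocks $x, a, b$ extracted from $A, B \in H^s$ really do sit in the corresponding smaller wreath products $H_L^s, H_{M-L}^s, H_{N-L}^s$, rather than in some larger ambient. This is essentially immediate from the definition of $H_N^s = \mathbb Z_s \wr S_N$ — a generalized permutation matrix that is block-diagonal in a given block decomposition has each diagonal block itself a generalized $\mathbb Z_s$-permutation matrix — but it is the step that makes the whole argument work, and so is worth isolating explicitly.
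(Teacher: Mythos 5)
Your proposal is correct and matches the paper's proof, which likewise reduces the statement to the action $(A,B)U=AUB^*$, transitivity, and the stabilizer computation at $U_0=\bigl(\begin{smallmatrix}1&0\\0&0\end{smallmatrix}\bigr)$, citing Proposition 1.3 as the template. You simply spell out the well-definedness, transitivity, and block-diagonality steps that the paper leaves implicit.
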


\begin{proof}
This follows by adapting the computations in the proof of Proposition 1.3 above. Indeed, we have an action map as follows, which is transitive:
$$H_M^s\times H_N^s\curvearrowright H_{MN}^{sL}\quad:\quad (A,B)U=AUB^*$$

The stabilizer of the point $U=(^1_0{\ }^0_0)$ follows to be the group $H_L^s\times H_{M-L}^s\times H_{N-L}^s$, embedded via $(x,a,b)\to[(^x_0{\ }^0_a),(^x_0{\ }^0_b)]$, and this gives the result.
\end{proof}

In the free case now, the idea is similar, by using inspiration from the construction of the quantum group $H_N^{s+}=\mathbb Z_s\wr_*S_N^+$ in \cite{bb+}. The result here is as follows:

\begin{proposition}
The noncommutative space $H_{MN}^{sL+}$ associated to the algebra
$$C(H_{MN}^{sL+})=C(U_{MN}^{L+})\Big/\left<u_{ij}u_{ij}^*=u_{ij}^*u_{ij}=p_{ij}={\rm projections},u_{ij}^s=p_{ij}\right>$$
has an action map, and is the target of a quotient map, as in Theorem 2.5 above.
\end{proposition}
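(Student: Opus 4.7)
The plan is to use the formulas from Proposition 2.4, namely
\[
\Phi(u_{ij})=U_{ij}=\sum_{kl}a_{ik}\otimes b_{jl}^*\otimes u_{kl},\qquad \pi(u_{ij})=V_{ij}=\sum_{l\leq L}a_{il}\otimes b_{jl}^*,
\]
with $a,b$ now the fundamental corepresentations of $C(H_M^{s+})$ and $C(H_N^{s+})$. Since $C(H_{MN}^{sL+})$ is a quotient of $C(U_{MN}^{L+})$, Proposition 2.4 already ensures that the partial isometry relations defining $C(U_{MN}^{L+})$ are preserved by $U_{ij}$ and $V_{ij}$; what remains is to check the extra relations $U_{ij}U_{ij}^*=U_{ij}^*U_{ij}=P_{ij}$ a projection, and $U_{ij}^s=P_{ij}$, and the analogous relations for $V_{ij}$, and then to verify the pentagon of Theorem 2.5.

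The key technical input is an in-row/in-column orthogonality inside $C(H_N^{s+})$: for $k\neq m$ one has $a_{ik}a_{im}^{(*)}=a_{ik}^{(*)}a_{im}=0$, and similarly across the first index. This follows from the fact that each $a_{ik}$ is a normal partial isometry, so $a_{ik}=a_{ik}p_{ik}^a=p_{ik}^a a_{ik}$, combined with $p_{ik}^a a_{im}^{(*)}=0$, the latter being obtained by computing $(p_{ik}^a a_{im}^{(*)})(p_{ik}^a a_{im}^{(*)})^*=p_{ik}^a p_{im}^a p_{ik}^a=0$ via magic orthogonality $p_{ik}^a p_{im}^a=0$.

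Granted this lemma, the calculations collapse to diagonal ones. Expanding $U_{ij}U_{ij}^*$ and $U_{ij}^*U_{ij}$, the off-diagonal tensor terms vanish, leaving
\[
P_{ij}=U_{ij}U_{ij}^*=U_{ij}^*U_{ij}=\sum_{kl}p_{ik}^a\otimes p_{jl}^b\otimes p_{kl}^u,
\]
which is a sum of pairwise orthogonal projections (orthogonality of distinct summands following again from magic), hence a projection. Expanding $U_{ij}^s$ similarly, all $s$-fold products $a_{ik_1}\cdots a_{ik_s}$ vanish unless $k_1=\ldots=k_s$, and likewise for the $b$-indices, producing $U_{ij}^s=\sum_{kl}a_{ik}^s\otimes(b_{jl}^*)^s\otimes u_{kl}^s=P_{ij}$ via the $s$-power relations in each factor. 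The same reduction handles $V_{ij}V_{ij}^*=V_{ij}^*V_{ij}=V_{ij}^s=\sum_{l\leq L}p_{il}^a\otimes p_{jl}^b$ after restricting the sum to $l\leq L$.

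For the pentagon compatibility, the formulas for $\Phi$, $\pi$ and the comultiplications on $C(H_M^{s+})$, $C(H_N^{s+})$ are symbolically identical to those in Theorem 2.5; that argument goes through verbatim, as it nowhere uses a $U^+$-specific relation, and both compositions produce $\sum_{s\leq L}\sum_{kl}a_{ik}\otimes b_{jl}^*\otimes a_{ks}\otimes b_{ls}^*$ up to the usual flip. The only genuinely non-formal step in the whole scheme is thus the in-row/in-column orthogonality lemma, which is where the magic-projection structure really enters; once it is in place, everything else is bookkeeping on the computations of Proposition 2.4 and Theorem 2.5.
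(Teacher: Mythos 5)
Your proof is correct and follows essentially the same route as the paper: expand $U_{ij}U_{ij}^*$, $U_{ij}^*U_{ij}$, $U_{ij}^s$ (and the $V$-analogues), use the row/column orthogonality $xy=xy^{(*)}=0$ of the $H^{s+}$ coordinates to collapse the sums to diagonal terms, and conclude. The only thing you add that the paper leaves implicit is the derivation of that orthogonality from the magic-projection relations (the paper simply asserts $xy=xy^*=0$ for distinct $x,y$ in a row or column, citing no computation), and the remark that the pentagon of Theorem 2.5 is formally identical here. Both are sound elaborations, not a different method.

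One typographical note for comparison: the paper's displayed expansion of $U_{ij}U_{ij}^*$ contains an index slip ($b_{jl}^*b_{jm}$ should read $b_{jl}^*b_{jn}$), which your version avoids. Also, your initial appeal to Proposition 2.4 to dispose of the $C(U_{MN}^{L+})$-defining relations is legitimate, since the $H^{s+}$ coordinates satisfy all the $U^+$ relations used there; the paper does not spell this out, but it is the same reduction.
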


\begin{proof}
We must show that if the variables $u_{ij}$ satisfy the relations in the statement, then these relations are satisfied as well for the following variables: 
$$U_{ij}=\sum_{kl}a_{ik}\otimes b_{jl}^*\otimes u_{kl}\quad,\quad
V_{ij}=\sum_{l\leq L}a_{il}\otimes b_{jl}^*$$

Since the standard coordinates $a_{ij},b_{ij}$ on the quantum groups $H_M^{s+},H_N^{s+}$ satisfy the relations $xy=xy^*=0$, for any $x\neq y$ on the same row or column of $a,b$, we obtain:
$$U_{ij}U_{ij}^*=\sum_{klmn}a_{ik}a_{im}^*\otimes b_{jl}^*b_{jm}\otimes u_{kl}u_{mn}^*=\sum_{kl}a_{ik}a_{ik}^*\otimes b_{jl}^*b_{jl}\otimes u_{kl}u_{kl}^*$$
$$V_{ij}V_{ij}^*=\sum_{l,r\leq L}a_{il}a_{ir}^*\otimes b_{jl}^*b_{jr}=\sum_{l\leq L}a_{il}a_{il}^*\otimes b_{jl}^*b_{jl}$$

Thus, in terms of the projections $x_{ij}=a_{ij}a_{ij}^*,y_{ij}=b_{ij}b_{ij}^*,p_{ij}=u_{ij}u_{ij}^*$, we have:
$$U_{ij}U_{ij}^*=\sum_{kl}x_{ik}\otimes y_{jl}\otimes p_{kl}\quad,\quad V_{ij}V_{ij}^*=\sum_{l\leq L}x_{il}\otimes y_{jl}$$

By repeating the computation, we conclude that these elements are projections. Also, a similar computation shows that $U_{ij}^*U_{ij},V_{ij}^*V_{ij}$ are given by the same formul\ae.

Finally, once again by using the relations of type $xy=xy^*=0$, we have:
$$U_{ij}^s=\sum_{k_rl_r}a_{ik_1}\ldots a_{ik_s}\otimes b_{jl_1}^*\ldots b_{jl_s}^*\otimes u_{k_1l_1}\ldots u_{k_sl_s}=\sum_{kl}a_{ik}^s\otimes(b_{jl}^*)^s\otimes u_{kl}^s$$
$$V_{ij}^s=\sum_{l_r\leq L}a_{il_1}\ldots a_{il_s}\otimes b_{jl_1}^*\ldots b_{jl_s}^*=\sum_{l\leq L}a_{il}^s\otimes(b_{jl}^*)^s$$

Thus the conditions of type $u_{ij}^s=p_{ij}$ are satisfied as well, and we are done.
\end{proof}

Let us discuss now the general case. We use the Kronecker symbols $\delta_\pi(i)\in\{-1,0,1\}$ from \cite{ba1}, depending on a twisting parameter $q=\pm1$, constructed as follows:
$$\delta_\sigma(i)=\begin{cases}
\delta_{\ker i\leq\sigma}&{\rm (untwisted\ case)}\\
\varepsilon(\ker i)\delta_{\ker i\leq\sigma}&{\rm (twisted\ case)}
\end{cases}$$

With this convention, we have the following result:

\begin{proposition}
The various spaces $G_{MN}^L$ constructed so far appear by imposing to the standard coordinates of $U_{MN}^{L+}$ the relations
$$\sum_{i_1\ldots i_s}\sum_{j_1\ldots j_s}\delta_\pi(i)\delta_\sigma(j)u_{i_1j_1}^{e_1}\ldots u_{i_sj_s}^{e_s}=L^{|\pi\vee\sigma|}$$
with $s=(e_1,\ldots,e_s)$ ranging over all the colored integers, and with $\pi,\sigma\in D(0,s)$.
\end{proposition}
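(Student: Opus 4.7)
The approach is to recognize the left-hand side of the claimed identity as the natural pairing of partition-vectors through the rectangular tensor representation $u^{\otimes s}$. Writing $T_\pi^M\in(\mathbb{C}^M)^{\otimes s}$ and $T_\sigma^N\in(\mathbb{C}^N)^{\otimes s}$ for the partition vectors obtained by evaluating the maps of Definition 3.2 at $1$ (with the $\bar{T}$ variants when $q=-1$), the sum in the statement is precisely the matrix coefficient $(T_\pi^M)^{*}\cdot u^{\otimes s}\cdot T_\sigma^N$, and the assertion is that this element of the algebra equals the scalar $L^{|\pi\vee\sigma|}\cdot 1$.

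The first step is to verify that these identities hold in $C(G_{MN}^L)$. Using the map of Proposition 2.4(2) together with the compatibility of Theorem 2.5, one substitutes $u_{ij}=\sum_{l\leq L}a_{il}\otimes b_{jl}^{*}$, where $a,b$ are the fundamental corepresentations of $G_M,G_N$. The relation factors into an $a$-sum and a $b$-sum; the quizziness of $G_M$ and $G_N$, applied in the form that $T_\pi^M$ is fixed by $a^{\otimes s}$ and $T_\sigma^N$ by $b^{\otimes s}$, collapses each factor. What remains is a combinatorial sum over multi-indices $l_1,\ldots,l_s\in\{1,\ldots,L\}$ forced to be constant on the blocks of $\pi\vee\sigma$, and the count of such multi-indices is precisely $L^{|\pi\vee\sigma|}$.

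For the converse direction -- that these relations already cut out $C(G_{MN}^L)$ inside $C(U_{MN}^{L+})$ -- I would argue by specialization. Appropriate small choices of $s,\pi,\sigma$ recover the defining relations of $U_{MN}^{L+}$: for example, taking $s=\circ\bullet$ with $\pi=\sigma$ the unique pair partition gives the trace condition $\sum_{ij}u_{ij}u_{ij}^{*}=L$, while longer colored words with pair-block partitions yield the idempotent relations $(uu^*)^2=uu^*$ and their $*$-variants. Modulo these, the remaining instances of the identity translate, by summing against $\delta_\sigma(i)$ exactly as in the $L=M=N$ sanity check, into the easiness-type constraints associated to the category $D$ that cut $G_N\subset U_N^+$.

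The main obstacle is the bookkeeping in the twisted case, where the signatures $\varepsilon(\ker i)$ and $\varepsilon(\ker j)$ must be tracked through the substitution $u_{ij}\mapsto\sum_{l\leq L}a_{il}\otimes b_{jl}^{*}$ and shown to split cleanly into an $i$-contribution and a $j$-contribution so that the factoring step above still works; more generally, the converse must be verified entry by entry against the classification of Theorem 3.8 (orthogonal/unitary/hyperoctahedral, classical/twisted/free). Both checks are routine given the conventions of \cite{ba1} and \cite{ba3} but require an explicit case analysis.
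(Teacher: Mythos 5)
Your reformulation of the identity as a ``rectangular intertwiner'' condition $(T_\pi^M)^*u^{\otimes s}T_\sigma^N=L^{|\pi\vee\sigma|}\cdot 1$ is appealing, but the forward direction you propose is not the paper's argument and has a gap. The paper works entirely inside $C(G_{MN}^L)$: it asserts that the defining relations can be recast as one-sided identities $\sum_{j}\delta_\sigma(j)u_{i_1j_1}^{e_1}\cdots u_{i_sj_s}^{e_s}=\delta_\sigma(i)$ for $\sigma$ a generator of $D$, then multiplies by $\delta_\pi(i)$ and sums over $i$, using $\delta_\pi(i)\delta_\sigma(i)=\delta_{\ker i\leq\pi\vee\sigma}$; the converse is by fixing $\pi$ and varying $\sigma$. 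Your computation after substituting $u_{ij}\mapsto\sum_{l\leq L}a_{il}\otimes b_{jl}^*$ is, nearly verbatim, the ``$V$-part'' of the proof of Theorem 4.5, which serves a different purpose: it verifies that the quotient morphism $C(G_{MN}^L)\to C(G_M\times G_N)$ of Proposition 2.4(2) respects the new presentation. Checking that the images of the $u_{ij}$ satisfy the displayed relation inside $C(G_M\times G_N)$ only establishes the relation in $C(G_{MN}^L)$ modulo the kernel of that morphism. In the classical case the morphism is injective, since the action of Proposition 1.3 is transitive, so your argument does work there; but in the twisted and free cases injectivity is precisely one of the analytic issues the paper flags after Theorem 2.5, so ``these identities hold in $C(G_{MN}^L)$'' does not follow from your first step.

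Your converse-direction strategy (specialize $s,\pi,\sigma$) is in the same spirit as the paper's one-line indication. One caveat that applies to both: the proposition's relations are fully contracted scalar identities, while the defining relations of $G_{MN}^L$ include matrix-valued ones such as $uu^*$ being a projection; recovering the latter from traced identities requires an extra positivity or Cauchy--Schwarz step that is left implicit both in your sketch and in the paper.
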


\begin{proof}
According to the various constructions in section 1-2 and in the beginning of this section, the relations defining $G_{MN}^L$ can be written as follows, with $\sigma$ ranging over a family of generators, with no upper legs, of the corresponding category of partitions $D$:
$$\sum_{j_1\ldots j_s}\delta_\sigma(j)u_{i_1j_1}^{e_1}\ldots u_{i_sj_s}^{e_s}=\delta_\sigma(i)$$

We therefore obtain the relations in the statement, as follows:
\begin{eqnarray*}
\sum_{i_1\ldots i_s}\sum_{j_1\ldots j_s}\delta_\pi(i)\delta_\sigma(j)u_{i_1j_1}^{e_1}\ldots u_{i_sj_s}^{e_s}
&=&\sum_{i_1\ldots i_s}\delta_\pi(i)\sum_{j_1\ldots j_s}\delta_\sigma(j)u_{i_1j_1}^{e_1}\ldots u_{i_sj_s}^{e_s}\\
&=&\sum_{i_1\ldots i_s}\delta_\pi(i)\delta_\sigma(i)
=\sum_{\tau\leq\pi\vee\sigma}\sum_{\ker i=\tau}(\pm1)^2\\
&=&\sum_{\tau\leq\pi\vee\sigma}\sum_{\ker i=\tau}1=L^{|\pi\vee\sigma|}
\end{eqnarray*}

As for the converse, this follows by using the relations in the statement, by keeping $\pi$ fixed, and by making $\sigma$ vary over all the partitions in the category.
\end{proof}

In the general case now, where $G=(G_N)$ is an arbitary uniform quizzy quantum group, we can construct spaces $G_{MN}^L$ by using the above relations, and we have:

\begin{theorem}
The spaces $G_{MN}^L\subset U_{MN}^{L+}$ constructed by imposing the relations 
$$\sum_{i_1\ldots i_s}\sum_{j_1\ldots j_s}\delta_\pi(i)\delta_\sigma(j)u_{i_1j_1}^{e_1}\ldots u_{i_sj_s}^{e_s}=L^{|\pi\vee\sigma|}$$
with $\pi,\sigma$ ranging over all the partitions in the associated category, having no upper legs, are subject to an action map/quotient map diagram, as in Theorem 2.5.
\end{theorem}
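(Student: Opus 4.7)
The plan is to follow the strategy of Proposition 2.4 and Theorem 2.5, taking as candidate action and quotient maps
$$U_{ij}=\sum_{kl}a_{ik}\otimes b_{jl}^*\otimes u_{kl},\qquad V_{ij}=\sum_{l\leq L}a_{il}\otimes b_{jl}^*,$$
with the orthogonal variants obtained by dropping the stars, and with colored analogues $U_{ij}^e,V_{ij}^e$ used when the ambient category has nontrivial colors. The task reduces to checking that the $U_{ij}$ and the $V_{ij}$ satisfy the defining relations of $G_{MN}^L$ listed in the statement; once this is in hand, the commutativity of the action/quotient diagram is purely formal, and goes through verbatim as in the proof of Theorem 2.5.

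The central tool is to rephrase quizziness as a family of scalar relations. For $\pi\in D(0,s)$, the vector $\dot T_\pi$ being fixed by $u^{\otimes s}$ is equivalent to
$$\sum_{j_1\ldots j_s}\delta_\pi(j)\,u_{i_1j_1}^{e_1}\cdots u_{i_sj_s}^{e_s}=\delta_\pi(i),$$
and, by closure of $D$ under the upside-down turning, also to the transposed version with the roles of $i$ and $j$ swapped. Both identities therefore hold for the $a_{ik}$ on $G_M$ and the $b_{jl}$ on $G_N$, since these lie in the same uniform family as $G=(G_N)$. Substituting $U_{ij}$ into the $G_{MN}^L$-relation and expanding produces a triple tensor whose $i$-summation collapses, via the $a$-side identity, to $\delta_\pi(k)$; whose $j$-summation collapses, via the $b$-side identity, to $\delta_\sigma(l)$; and whose remaining $k,l$-summation is precisely the $G_{MN}^L$-relation applied to the original $u_{kl}$, producing the scalar $L^{|\pi\vee\sigma|}$. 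For $V_{ij}$ the same pair of collapses reduces the sum to $\sum_{l_r\leq L}\delta_\pi(l)\delta_\sigma(l)$; the signature factors square to $1$ on the support $\{\ker l\leq\pi\vee\sigma\}$, so this counts tuples $(l_1,\ldots,l_s)$ with $l_r\leq L$ and $\ker l\leq\pi\vee\sigma$, giving exactly $L^{|\pi\vee\sigma|}$.

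The main obstacle is the twisted case $q=-1$: there $\delta_\pi$ carries the signature $\varepsilon(\ker i)$, the coordinates on $G_M,G_N$ obey (anti)commutation rather than commutation relations, and the quizziness identities are stated using $\bar T_\pi$ rather than $T_\pi$. What makes the argument go through is that $G_M$, $G_N$ and $G$ share a single uniform category $D\subset P_{even}$ and a single sign convention, so all three collapse identities are compatible, and the $\varepsilon(\ker l)^2=1$ trick clears the residual signs from the $V$-count. The $U$-verification requires no further sign bookkeeping beyond what is already absorbed into the $\delta_\pi$ notation, and the final commuting-square check on $u_{ij}$ transfers from the proof of Theorem 2.5 without change.
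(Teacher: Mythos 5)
Your proposal is correct and takes essentially the same route as the paper: the same candidate maps $U_{ij}=\sum_{kl}a_{ik}\otimes b_{jl}^*\otimes u_{kl}$ and $V_{ij}=\sum_{l\leq L}a_{il}\otimes b_{jl}^*$, the same double-collapse of the $i$- and $j$-sums via the fixed-vector identity for $\dot T_\pi$, and the same $\varepsilon(\ker l)^2=1$ count yielding $L^{|\pi\vee\sigma|}$. You merely make explicit what the paper leaves implicit, namely the scalar form of quizziness and the sign bookkeeping in the twisted case, both of which are correctly absorbed into the $\delta_\pi$ convention.
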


\begin{proof}
We proceed as in the proof of Proposition 2.4. We must prove that, if the variables $u_{ij}$ satisfy the relations in the statement, then so do the following variables:
$$U_{ij}=\sum_{kl}a_{ik}\otimes b_{jl}^*\otimes u_{kl}\quad,\quad 
V_{ij}=\sum_{l\leq L}a_{il}\otimes b_{jl}^*$$

Regarding the variables $U_{ij}$, the computation here goes as follows:
\begin{eqnarray*}
&&\sum_{i_1\ldots i_s}\sum_{j_1\ldots j_s}\delta_\pi(i)\delta_\sigma(j)U_{i_1j_1}^{e_1}\ldots U_{i_sj_s}^{e_s}\\
&=&\sum_{i_1\ldots i_s}\sum_{j_1\ldots j_s}\sum_{k_1\ldots k_s}\sum_{l_1\ldots l_s}\delta_\pi(i)\delta_\sigma(j)a_{i_1k_1}^{e_1}\ldots a_{i_sk_s}^{e_s}\otimes(b_{j_sl_s}^{e_s}\ldots b_{j_1l_1}^{e_1})^*\otimes u_{k_1l_1}^{e_1}\ldots u_{k_sl_s}^{e_s}\\
&=&\sum_{k_1\ldots k_s}\sum_{l_1\ldots l_s}\delta_\pi(k)\delta_\sigma(l)u_{k_1l_1}^{e_1}\ldots u_{k_sl_s}^{e_s}=L^{|\pi\vee\sigma|}
\end{eqnarray*}

For the variables $V_{ij}$ the proof is similar, as follows:
\begin{eqnarray*}
&&\sum_{i_1\ldots i_s}\sum_{j_1\ldots j_s}\delta_\pi(i)\delta_\sigma(j)V_{i_1j_1}^{e_1}\ldots V_{i_sj_s}^{e_s}\\
&=&\sum_{i_1\ldots i_s}\sum_{j_1\ldots j_s}\sum_{l_1,\ldots,l_s\leq L}\delta_\pi(i)\delta_\sigma(j)a_{i_1l_1}^{e_1}\ldots a_{i_sl_s}^{e_s}\otimes(b_{j_sl_s}^{e_s}\ldots b_{j_1l_1}^{e_1})^*\\
&=&\sum_{l_1,\ldots,l_s\leq L}\delta_\pi(l)\delta_\sigma(l)=L^{|\pi\vee\sigma|}
\end{eqnarray*}

Thus we have constructed an action map, and a quotient map, as in Proposition 2.4 above, and the commutation of the diagram in Theorem 2.4 is then trivial.
\end{proof}

The above results generalize some of the constructions in \cite{ba3}. As explained in \cite{ba3}, there are many interesting questions regarding such spaces, and their quantum isometry groups. In what follows we will advance on some related topics, of probabilistic nature.

\section{Integration theory}

In the remainder of this paper we discuss the integration over $G_{MN}^L$, with a number of explicit formul\ae. Our main result will be the fact that the operations of type $G_{MN}^L\to G_{MN}^{L+}$ are indeed ``liberations'', in the sense of the Bercovici-Pata bijection \cite{bpa}.

The integration over $G_{MN}^L$ is best introduced as follows:

\begin{definition}
The integration functional of $G_{MN}^L$ is the composition
$$tr:C(G_{MN}^L)\to C(G_M\times G_N)\to\mathbb C$$
of the representation $u_{ij}\to\sum_{l\leq L}a_{il}\otimes b_{jl}^*$ with the Haar functional of $G_M\times G_N$.
\end{definition}

Here we use the standard fact, proved by Woronowicz in \cite{wo1}, that any compact quantum group $G$ has a Haar integration functional, $\int_G:C(G)\to\mathbb C$, which is by definition the unique positive unital trace subject to the following invariance relations:
$$\left(\int_G\otimes id\right)\Delta\varphi=\left(id\otimes\int_G\right)\Delta\varphi=\int_G\varphi$$

Observe that in the case $L=M=N$ we obtain the integration over $G_N$. Also, at $L=M=1$ we obtain the integration over the sphere. More generally, at any $L=M$ we obtain the integration over the corresponding row algebra of $G_M$, discussed in \cite{bss}. 

In the general case now, we first have the following result:

\begin{proposition}
The integration functional $tr$ has the invariance property 
$$(id\otimes tr)\Phi(x)=tr(x)1$$
with respect to the coaction map given by $\Phi(u_{ij})=\sum_{kl}a_{ik}\otimes b_{jl}^*\otimes u_{kl}$.
\end{proposition}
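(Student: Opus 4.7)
The plan is to deduce the invariance of $tr$ directly from the compatibility diagram of Theorem 2.5 combined with the standard left-invariance of the Haar functional on the compact quantum group $G=G_M\times G_N$.

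First, I would unfold the definition: write $tr=h\circ\pi$, where $\pi:C(G_{MN}^L)\to C(G_M\times G_N)$ is the morphism $u_{ij}\mapsto\sum_{l\leq L}a_{il}\otimes b_{jl}^*$ of Proposition 2.4 and $h=\int_{G_M\times G_N}$ is the Haar functional. Then for any $x\in C(G_{MN}^L)$,
$$(id\otimes tr)\Phi(x)=(id\otimes h)(id\otimes\pi)\Phi(x).$$

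Next, I would invoke Theorem 2.5, which says precisely that $(id\otimes\pi)\Phi=\Delta\pi$, where $\Delta$ is the coproduct of $C(G_M\times G_N)=C(G_M)\otimes C(G_N)$. Substituting, the above becomes
$$(id\otimes h)\Delta(\pi(x)).$$
By the defining left-invariance property of the Haar functional on $G_M\times G_N$, namely $(id\otimes h)\Delta(y)=h(y)\cdot 1$ for all $y\in C(G_M\times G_N)$, this equals $h(\pi(x))\cdot 1=tr(x)\cdot 1$, which is the desired identity.

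The only non-trivial ingredient is the commuting diagram $\Delta\pi=(id\otimes\pi)\Phi$ from Theorem 2.5, so this is where all the content lies; there is no real computational obstacle beyond citing that theorem. A minor sanity check is to verify on the generators $x=u_{ij}$ that both compositions produce $\sum_{k,l}\sum_{s\leq L}a_{ik}\otimes b_{jl}^*\otimes a_{ks}\otimes b_{ls}^*$ (up to the appropriate leg reordering), which is already recorded in the proof of Theorem 2.5, and then to note that the statement $(id\otimes tr)\Phi(x)=tr(x)\cdot 1$ extends from the generators to all of $C(G_{MN}^L)$ by linearity, multiplicativity of $\Phi$, and continuity of $tr$.
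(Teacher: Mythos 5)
Your proof is correct, and it reorganizes the paper's argument into a cleaner two-step deduction. The paper's proof restricts to the orthogonal case and expands $(id\otimes tr)\Phi(u_{i_1j_1}\cdots u_{i_sj_s})$ by hand on monomials: it unfolds both $\Phi$ and the representation $\pi$, reaches a double sum, and then applies the Haar invariance of $\int_{G_M}$ and $\int_{G_N}$ separately to collapse it to $tr(u_{i_1j_1}\cdots u_{i_sj_s})$. In effect, the first half of that calculation is a re-derivation of the identity $(id\otimes\pi)\Phi=\Delta\pi$ that the paper already established. You instead quote that identity directly from the commuting square of Theorem 2.5 (Theorem 4.5 in the general quizzy case, which is the one actually needed in Section 5), and then apply the invariance $(id\otimes h)\Delta=h(\cdot)1$ of the Haar state of the product $G_M\times G_N$ in a single stroke. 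The mathematical content is the same --- both proofs ultimately rest on the invariance of Haar on $G_M$ and $G_N$ --- but your factorization through the lemma avoids repeating the expansion and makes the structural reason for the invariance of $tr$ transparent. One small point to be precise about: you should cite the diagram at the level it actually holds for the generality of Section 5, i.e.\ Theorem 4.5 rather than Theorem 2.5 alone, since the latter is stated only for the orthogonal/unitary $\times$-cases.
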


\begin{proof}
We restrict the attention to the orthogonal case, the proof in the unitary case being similar. We must check the following formula:
$$(id\otimes tr)\Phi(u_{i_1j_1}\ldots u_{i_sj_s})=tr(u_{i_1j_1}\ldots u_{i_sj_s})$$

Let us compute the left term. This is given by:
\begin{eqnarray*}
X
&=&(id\otimes tr)\sum_{k_rl_r}a_{i_1k_1}\ldots a_{i_sk_s}\otimes b_{j_1l_1}^*\ldots b_{j_sl_s}^*\otimes u_{k_1l_1}\ldots u_{k_sl_s}\\
&=&\sum_{k_rl_r}\sum_{m_r\leq L}a_{i_1k_1}\ldots a_{i_sk_s}\otimes b_{j_1l_1}^*\ldots b_{j_sl_s}^*\int_{G_M}a_{k_1m_1}\ldots a_{k_sm_s}\int_{G_N}b_{l_1m_1}^*\ldots b_{l_sm_s}^*\\
&=&\sum_{m_r\leq L}\sum_{k_r}a_{i_1k_1}\ldots a_{i_sk_s}\int_{G_M}a_{k_1m_1}\ldots a_{k_sm_s}
\otimes\sum_{l_r}b_{j_1l_1}^*\ldots b_{j_sl_s}^*\int_{G_N}b_{l_1m_1}^*\ldots b_{l_sm_s}^*
\end{eqnarray*}

By using now the invariance property of the Haar functionals of $G_M,G_N$, we obtain:
\begin{eqnarray*}
X
&=&\sum_{m_r\leq L}\left(id\otimes\int_{G_M}\right)\Delta(a_{i_1m_1}\ldots a_{i_sm_s})
\otimes\left(id\otimes\int_{G_N}\right)\Delta(b_{j_1m_1}^*\ldots b_{j_sm_s}^*)\\
&=&\sum_{m_r\leq L}\int_{G_M}a_{i_1m_1}\ldots a_{i_sm_s}\otimes\int_{G_N}b_{j_1m_1}^*\ldots b_{j_sm_s}^*\\
&=&\left(\int_{G_M}\otimes\int_{G_N}\right)\sum_{m_r\leq L}a_{i_1m_1}\ldots a_{i_sm_s}\otimes b_{j_1m_1}^*\ldots b_{j_sm_s}^*
\end{eqnarray*}

But this gives the formula in the statement, and we are done.
\end{proof}

We will prove now that $tr$ is in fact the unique positive unital invariant trace on $C(G_{MN}^L)$. For this purpose, we will need the Weingarten formula. We recall from section 4 above that the generalized Kronecker symbols are constructed as follows:
$$\delta_\sigma(i)=\begin{cases}
\delta_{\ker i\leq\sigma}&{\rm (untwisted\ case)}\\
\varepsilon(\ker i)\delta_{\ker i\leq\sigma}&{\rm (twisted\ case)}
\end{cases}$$

With this convention, the integration formula is as follows:

\begin{theorem}
We have the Weingarten type formula
$$\int_{G_{MN}^L}u_{i_1j_1}\ldots u_{i_sj_s}=\sum_{\pi\sigma\tau\nu}L^{|\sigma\vee\nu|}\delta_\pi(i)\delta_\tau(j)W_{sM}(\pi,\sigma)W_{sN}(\tau,\nu)$$
where $W_{sM}=G_{sM}^{-1}$, with $G_{sM}(\pi,\sigma)=M^{|\pi\vee\sigma|}$.
\end{theorem}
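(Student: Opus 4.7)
The plan is to unwind Definition 5.1, apply the twisted Weingarten formula to the two factor groups $G_M$ and $G_N$ separately, and then contract the shared summation index using a combinatorial identity that has already appeared inside the proof of Proposition 4.4.

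First, by the very definition of $tr$ as the composition of the representation $u_{ij}\mapsto\sum_{l\le L}a_{il}\otimes b_{jl}^*$ with $\int_{G_M}\otimes\int_{G_N}$, the integrand equals
\[
tr(u_{i_1j_1}\ldots u_{i_sj_s})=\sum_{m_1,\ldots,m_s\le L}\Bigl(\int_{G_M}a_{i_1m_1}\ldots a_{i_sm_s}\Bigr)\Bigl(\int_{G_N}b^*_{j_1m_1}\ldots b^*_{j_sm_s}\Bigr).
\]
Since $G_M$ and $G_N$ are uniform quizzy quantum groups, each of these two factors admits a Weingarten expansion, which is the standard tool from \cite{ba1}, \cite{bsp}, \cite{bbc}:
\[
\int_{G_M}a_{i_1m_1}\ldots a_{i_sm_s}=\sum_{\pi,\sigma}\delta_\pi(i)\delta_\sigma(m)W_{sM}(\pi,\sigma),
\]
\[
\int_{G_N}b^*_{j_1m_1}\ldots b^*_{j_sm_s}=\sum_{\tau,\nu}\delta_\tau(j)\delta_\nu(m)W_{sN}(\tau,\nu),
\]
with $W_{sM},W_{sN}$ being the inverses of the Gram matrices $G_{sM}(\pi,\sigma)=M^{|\pi\vee\sigma|}$ and $G_{sN}(\tau,\nu)=N^{|\tau\vee\nu|}$, the partitions ranging over the category associated to $G$.

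Substituting these two expansions, rearranging and pulling out the factors independent of $m$ reduces the whole calculation to the evaluation of the inner sum $\sum_{m_1,\ldots,m_s\le L}\delta_\sigma(m)\delta_\nu(m)$. But this is precisely the sum computed at the end of the proof of Proposition 4.4: the product is supported on multi-indices with $\ker m\le\sigma\vee\nu$, the two twisting signatures multiply to $\varepsilon(\ker m)^2=1$, and decomposing by the value of $\ker m$ yields $L^{|\sigma\vee\nu|}$. Plugging this back in and relabelling gives the displayed formula.

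The only mild subtlety I anticipate is the handling of the $b^*$-monomial: one must justify that the Weingarten formula for it runs over the same index set of partitions, with the same kernel $W_{sN}$, as for the monomial $b_{j_1m_1}\ldots b_{j_sm_s}$. In the orthogonal and twisted cases this is automatic because the entries are self-adjoint; in the unitary case it follows from the canonical identification of the uniformly black-coloured and uniformly white-coloured sub-categories, both of which produce the same Gram matrix $G_{sN}(\tau,\nu)=N^{|\tau\vee\nu|}$. Beyond this bookkeeping, the argument is a routine expansion with no further analytic input.
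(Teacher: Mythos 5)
Your proof is correct and follows essentially the same path as the paper's: unwind Definition 5.1, apply the Weingarten formula to each factor $G_M$, $G_N$, and evaluate the inner contraction $\sum_{m\leq L}\delta_\sigma(m)\delta_\nu(m)=L^{|\sigma\vee\nu|}$ using the cancellation $\varepsilon(\ker m)^2=1$. Your closing remark on the $b^*$-monomial is a reasonable bookkeeping note (the paper treats it implicitly by restricting to the orthogonal case and saying the unitary case is similar), but it does not change the line of argument.
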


\begin{proof}
We make use of the usual quantum group Weingarten formula, for which we refer to \cite{ba1}, \cite{bsp}. By using this formula for $G_M,G_N$, we obtain:
\begin{eqnarray*}
\int_{G_{MN}^L}u_{i_1j_1}\ldots u_{i_sj_s}
&=&\sum_{l_1\ldots l_s\leq L}\int_{G_M}a_{i_1l_1}\ldots a_{i_sl_s}\int_{G_N}b_{j_1l_1}^*\ldots b_{j_sl_s}^*\\
&=&\sum_{l_1\ldots l_s\leq L}\sum_{\pi\sigma}\delta_\pi(i)\delta_\sigma(l)W_{sM}(\pi,\sigma)\sum_{\tau\nu}\delta_\tau(j)\delta_\nu(l)W_{sN}(\tau,\nu)\\
&=&\sum_{\pi\sigma\tau\nu}\left(\sum_{l_1\ldots l_s\leq L}\delta_\sigma(l)\delta_\nu(l)\right)\delta_\pi(i)\delta_\tau(j)W_{sM}(\pi,\sigma)W_{sN}(\tau,\nu)
\end{eqnarray*}

Let us compute now the coefficient appearing in the last formula. Since the signature map takes $\pm1$ values, for any multi-index $l=(l_1,\ldots,l_s)$ we have:
$$\delta_\sigma(l)\delta_\nu(l)=\delta_{\ker l\leq\sigma}\varepsilon(\ker l)\cdot\delta_{\ker l\leq\nu}\varepsilon(\ker l)=\delta_{\ker l\leq\sigma\vee\nu}$$

Thus the coefficient is $L^{|\sigma\vee\nu|}$, and we obtain the formula in the statement.
\end{proof}

We can now derive an abstract characterization of $tr$, as follows:

\begin{proposition}
The integration functional $tr$ constructed above is the unique positive unital $C^*$-algebra trace $C(G_{MN}^L)\to\mathbb C$ which is invariant under the action of $G_M\times G_N$.
\end{proposition}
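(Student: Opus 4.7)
The plan is to reduce the uniqueness statement to the single scalar identity
$$E(x):=\left(\int_{G_M\times G_N}\otimes\,id\right)\Phi(x)=tr(x)\cdot 1,\qquad x\in C(G_{MN}^L),$$
where $\Phi$ is the coaction from Proposition 2.4. Granting this, given any positive unital invariant functional $\omega$, the invariance condition $(id\otimes\omega)\Phi(x)=\omega(x)\cdot 1$ combined with Fubini yields
$$\omega(x)=\left(\int_{G_M\times G_N}\otimes\,\omega\right)\Phi(x)=\omega(E(x))=\omega(tr(x)\cdot 1)=tr(x),$$
so that $\omega=tr$. In particular the trace property of $\omega$ is not actually used at this stage; the argument will establish uniqueness among all positive unital invariant functionals.

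To establish the identity $E(x)=tr(x)\cdot 1$ it is enough, by linearity, to treat a monomial $x=u_{i_1j_1}^{e_1}\cdots u_{i_sj_s}^{e_s}$. Expanding $\Phi(x)$ exactly as in the proof of Theorem 4.5, and then applying the quantum group Weingarten formula of \cite{ba1}, \cite{bsp} to evaluate the $\int_{G_M}$ and $\int_{G_N}$ legs separately, one arrives at
$$E(x)=\sum_{\pi,\sigma,\tau,\nu}\delta_\pi(i)\delta_\tau(j)W_{sM}(\pi,\sigma)W_{sN}(\tau,\nu)\sum_{k,l}\delta_\sigma(k)\delta_\nu(l)u_{k_1l_1}^{e_1}\cdots u_{k_sl_s}^{e_s}.$$
The inner sum over $k,l$ is precisely one of the defining relations of $C(G_{MN}^L)$ listed in Theorem 4.5, and therefore collapses to the scalar $L^{|\sigma\vee\nu|}\cdot 1$. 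Matching the resulting outer coefficients against the Weingarten expression for $tr$ in Theorem 5.3 gives the desired conclusion $E(x)=tr(x)\cdot 1$.

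I expect the main obstacle to lie not in this final scalar identity, which is essentially Weingarten bookkeeping already carried out elsewhere in the paper, but in tracking the sign factors and the reversed ordering of the $b$-legs appearing in $\Phi(x)$ in the twisted case, so that the coefficient of each monomial in the Weingarten expansion really aligns with the $\delta_\sigma\delta_\nu$ template of Theorem 4.5. This manipulation is however the same as the one performed in the proofs of Proposition 2.4 and Theorem 4.5, and reusing it verbatim closes the argument; beyond this check, everything is a routine application of Fubini and of the invariance of the Haar functional on $G_M\times G_N$.
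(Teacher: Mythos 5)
Your proposal is correct and follows essentially the same route as the paper: establish the ergodicity identity $(\int_{G_M}\otimes\int_{G_N}\otimes id)\Phi=tr(\cdot)1$ via the Weingarten expansion and the defining relations of Theorem 4.5, then feed any invariant functional through it to force agreement with $tr$. Your side remark that the trace property is never actually invoked is accurate and matches the structure of the paper's own argument, which likewise only uses positivity, unitality and invariance.
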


\begin{proof}
We use the method in \cite{bss}, the point being to show that $tr$ has the following ergodicity property: 
$$\left(\int_{G_M}\otimes\int_{G_N}\otimes id\right)\Phi=tr(.)1$$

We restrict the attention to the orthogonal case, the proof in the unitary case being similar. We must verify that the following holds:
$$\left(\int_{G_M}\otimes\int_{G_N}\otimes id\right)\Phi(u_{i_1j_1}\ldots u_{i_kj_k})=tr(u_{i_1j_1}\ldots u_{i_kj_k})1$$

By using the Weingarten formula, the left term can be written as follows:
\begin{eqnarray*}
X
&=&\sum_{k_1\ldots k_s}\sum_{l_1\ldots l_s}\int_{G_M}a_{i_1k_1}\ldots a_{i_sk_s}\int_{G_N}b_{j_1l_1}\ldots b_{j_sl_s}\cdot u_{k_1l_1}\ldots u_{k_sl_s}\\
&=&\sum_{k_1\ldots k_s}\sum_{l_1\ldots l_s}\sum_{\pi\sigma}\delta_\pi(i)\delta_\sigma(k)W_{sM}(\pi,\sigma)\sum_{\tau\nu}\delta_\tau(j)\delta_\nu(l)W_{sN}(\tau,\nu)\cdot u_{k_1l_1}\ldots u_{k_sl_s}\\
&=&\sum_{\pi\sigma\tau\nu}\delta_\pi(i)\delta_\tau(j)W_{sM}(\pi,\sigma)W_{sN}(\tau,\nu)\sum_{k_1\ldots k_s}\sum_{l_1\ldots l_s}\delta_\sigma(k)\delta_\nu(l)u_{k_1l_1}\ldots u_{k_sl_s}
\end{eqnarray*}

By using now the formula in Theorem 4.5 above, we obtain:
$$X=\sum_{\pi\sigma\tau\nu}L^{|\sigma\vee\nu|}\delta_\pi(i)\delta_\tau(j)W_{sM}(\pi,\sigma)W_{sN}(\tau,\nu)$$

Now by comparing with the formula in Theorem 5.3, this proves our claim.

Assume now that $\tau:C(G_{MN}^L)\to\mathbb C$ satisfies the invariance condition. We have:
\begin{eqnarray*}
\tau\left(\int_{G_M}\otimes\int_{G_N}\otimes id\right)\Phi(x)
&=&\left(\int_{G_M}\otimes\int_{G_N}\otimes\tau\right)\Phi(x)\\
&=&\left(\int_{G_M}\otimes\int_{G_N}\right)(id\otimes\tau)\Phi(x)\\
&=&\left(\int_{G_M}\otimes\int_{G_N}\right)(\tau(x)1)=\tau(x)
\end{eqnarray*}

On the other hand, according to the formula established above, we have as well:
$$\tau\left(\int_{G_M}\otimes\int_{G_N}\otimes id\right)\Phi(x)=\tau(tr(x)1)=tr(x)$$

Thus we obtain $\tau=tr$, and this finishes the proof.
\end{proof}

\section{Probabilistic aspects}

We discuss now the precise computation of the laws of certain linear combinations of coordinates. A set of coordinates $\{u_{ij}\}$ is called ``non-overlapping'' if each horizontal index $i$ and each vertical index $j$ appears at most once. With this convention, we have:

\begin{proposition}
For a sum $\chi_E=\sum_{(ij)\in E}u_{ij}$ of non-overlapping coordinates we have
$$\int_{G_{MN}^L}\chi_E^s=\sum_{\pi\sigma\tau\nu}K^{|\pi\vee\tau|}L^{|\sigma\vee\nu|}W_{sM}(\pi,\sigma)W_{sN}(\tau,\nu)$$
where $K=|E|$ is the cardinality of the indexing set.
\end{proposition}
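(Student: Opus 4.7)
The plan is to expand $\chi_E^s$ as an $s$-fold sum of monomials, apply the Weingarten formula of Theorem 5.3 term by term, interchange the order of summation, and reduce the statement to a counting problem controlled by the non-overlapping hypothesis. Writing each element of $E$ as a pair $e=(i,j)$, we have
$$\chi_E^s = \sum_{e_1,\ldots,e_s \in E} u_{i_1j_1}\cdots u_{i_sj_s},$$
so after plugging in Theorem 5.3 and collecting the factors depending on the $e_r$'s, everything reduces to evaluating the combinatorial quantity
$$C(\pi,\tau) = \sum_{e_1,\ldots,e_s \in E} \delta_\pi(i)\,\delta_\tau(j)$$
for each pair of partitions $\pi,\tau$. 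Once $C(\pi,\tau) = K^{|\pi \vee \tau|}$ is established, substitution gives exactly the formula in the statement.

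The non-overlapping hypothesis enters at this step in a decisive way. By assumption, the two coordinate projections $E \to \{1,\ldots,M\}$ and $E \to \{1,\ldots,N\}$ are both injective on $E$, so for any $s$-tuple $(e_1,\ldots,e_s) \in E^s$ the multi-indices $i$ and $j$ have a common kernel partition: $\ker i = \ker j = \ker e$. In the untwisted case this immediately yields
$$\delta_\pi(i)\,\delta_\tau(j) = \delta_{\ker e \leq \pi}\,\delta_{\ker e \leq \tau} = \delta_{\ker e \leq \pi \vee \tau},$$
the last equality being the operational restatement of the fact that $e$ is constant on the blocks of both $\pi$ and $\tau$ iff it is constant on the blocks of $\pi \vee \tau$. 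In the twisted case the only new ingredient is the scalar $\varepsilon(\ker i)\varepsilon(\ker j) = \varepsilon(\ker e)^2 = 1$, which disappears, so the same identity holds uniformly.

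It then remains to count $s$-tuples $e \in E^s$ that are constant on each block of $\pi \vee \tau$: such tuples are parametrized by an independent choice of an element of $E$ per block, giving $|E|^{|\pi \vee \tau|} = K^{|\pi \vee \tau|}$ in total, hence $C(\pi,\tau) = K^{|\pi \vee \tau|}$. Substituting back into the Weingarten expansion produces the announced formula. The argument is routine once the setup is fixed; the one place where care is required is the signature cancellation in the twisted case, which is essentially the only genuine verification to perform.
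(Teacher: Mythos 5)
Your proposal is correct and follows essentially the same route as the paper's proof: expand $\chi_E^s$, apply Theorem 5.3, and use injectivity of the two coordinate projections of $E$ (which the paper encodes via the embeddings $\alpha,\beta$) to identify $\ker i=\ker j$ and hence reduce the interior sum to a count of tuples constant on each block of $\pi\vee\tau$, giving $K^{|\pi\vee\tau|}$. The signature cancellation $\varepsilon(\ker e)^2=1$ in the twisted case that you highlight is exactly the observation the paper defers to ``as explained in the proof of Theorem 5.3.''
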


\begin{proof}
In terms of $K=|E|$, we can write $E=\{(\alpha(i),\beta(i))\}$, for certain embeddings $\alpha:\{1,\ldots,K\}\subset\{1,\ldots,M\}$ and $\beta:\{1,\ldots,K\}\subset\{1,\ldots,N\}$. In terms of these maps $\alpha,\beta$, the moment in the statement is given by:
$$M_s=\int_{G_{MN}^L}\left(\sum_{i\leq K}u_{\alpha(i)\beta(i)}\right)^s$$

By using the Weingarten formula, we can write this quantity as follows:
\begin{eqnarray*}
M_s
&=&\int_{G_{MN}^L}\sum_{i_1\ldots i_s\leq K}u_{\alpha(i_1)\beta(i_1)}\ldots u_{\alpha(i_s)\beta(i_s)}\\
&=&\sum_{i_1\ldots i_s\leq K}\sum_{\pi\sigma\tau\nu}L^{|\sigma\vee\nu|}\delta_\pi(\alpha(i_1),\ldots,\alpha(i_s))\delta_\tau(\beta(i_1),\ldots,\beta(i_s))W_{sM}(\pi,\sigma)W_{sN}(\tau,\nu)\\
&=&\sum_{\pi\sigma\tau\nu}\left(\sum_{i_1\ldots i_s\leq K}\delta_\pi(i)\delta_\tau(i)\right)L^{|\sigma\vee\nu|}W_{sM}(\pi,\sigma)W_{sN}(\tau,\nu)
\end{eqnarray*}

But, as explained in the proof of Theorem 5.3, the coefficient on the left in the last formula equals $K^{|\pi\vee\tau|}$. We therefore obtain the formula in the statement.
\end{proof}

We can further advance in the classical/twisted and free cases, where the Weingarten theory for the corresponding quantum groups is available from \cite{ba1}, \cite{bb+}, \cite{bsp}. The result here, which justifies our various ``liberation'' claims, is as follows:

\begin{theorem}
In the context of the liberation operations $O_{MN}^L\to O_{MN}^{L+}$, $U_{MN}^L\to U_{MN}^{L+}$, $H_{MN}^{sL}\to H_{MN}^{sL+}$, the laws of the sums of non-overlapping coordinates,
$$\chi_E=\sum_{(ij)\in E}u_{ij}$$
are in Bercovici-Pata bijection, in the $|E|=\kappa N,L=\lambda N,M=\mu N,N\to\infty$ limit.
\end{theorem}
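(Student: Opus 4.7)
The plan is to use the explicit Weingarten formula from Proposition 6.1 as the starting point, and then to carry out a careful asymptotic analysis as $N \to \infty$. Since convergence in moments implies the convergence of the associated $*$-distributions (the variables $\chi_E$ being bounded, as sums of coordinates of operator norm at most $1$ on each side), it suffices to prove that the $s$-th moment of $\chi_E$ under $\int_{G_{MN}^L}$ converges, as $|E|=\kappa N$, $L=\lambda N$, $M=\mu N$, $N\to\infty$, to a value which agrees with what the Bercovici--Pata recipe \cite{bpa} predicts.

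First, I would invoke the formula
$$\int_{G_{MN}^L}\chi_E^s=\sum_{\pi\sigma\tau\nu\in D(s)}K^{|\pi\vee\tau|}L^{|\sigma\vee\nu|}W_{sM}(\pi,\sigma)W_{sN}(\tau,\nu),$$
where $D$ is the category of partitions associated with $G$, and apply the standard asymptotic estimate for easy quantum group Weingarten matrices, namely $W_{sN}(\pi,\sigma)=N^{-|\pi|}(\delta_{\pi\sigma}+O(N^{-1}))$, with uniform control over $\pi,\sigma\in D(s)$. Substituting this and the scalings $K=\kappa N$, $L=\lambda N$, $M=\mu N$, the inner sums over $\sigma,\nu$ collapse to $\sigma=\pi$ and $\nu=\tau$ in leading order, giving
$$\int_{G_{MN}^L}\chi_E^s\;=\;\sum_{\pi,\tau\in D(s)}\kappa^{|\pi\vee\tau|}\lambda^{|\pi\vee\tau|}\mu^{-|\pi|}\cdot N^{\,2|\pi\vee\tau|-|\pi|-|\tau|}\;+\;o(1).$$
Since $|\pi\vee\tau|\leq\min(|\pi|,|\tau|)$ with equality forcing $\pi=\tau$, the exponent $2|\pi\vee\tau|-|\pi|-|\tau|$ is non-positive, vanishing exactly on the diagonal $\pi=\tau$. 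Therefore the limit $s$-th moment is
$$m_s(t)\;=\;\sum_{\pi\in D(s)}t^{|\pi|},\qquad t=\frac{\kappa\lambda}{\mu}.$$

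Once this moment formula is established, the Bercovici--Pata verification is purely combinatorial: the category $D$ changes from $P_2,P_{even}^*,P_{even}^s$ in the classical cases $O,U,H^s$ to $NC_2,\mathcal{NC}_{even},NC_{even}^s$ in the corresponding free cases, and this is exactly the change $P\leftrightarrow NC$ that implements the Bercovici--Pata bijection at the level of $\pi$-sums; for instance $m_s(t)=(s-1)!!\,t^{s/2}$ for $O$ (Gaussian of variance $t$) versus $C_{s/2}\,t^{s/2}$ for $O^+$ (semicircle of variance $t$), and analogously a compound Poisson / compound free Poisson pair arises for $H^s$ and $H^{s+}$, via the standard block-counting interpretation of classical versus free cumulants on $P_{even}^s$ versus $NC_{even}^s$.

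The main obstacle is the uniform asymptotic for the Weingarten matrices $W_{sM},W_{sN}$ as $M,N\to\infty$ linked by the common scale $N$. This is known in the orthogonal, unitary and hyperoctahedral easy settings from \cite{ba1}, \cite{bb+}, \cite{bsp}, so it suffices to quote those estimates; a minor technical point is to check that the bound on the error terms is strong enough to be summed over the (finite, $N$-independent) set $D(s)$, which is immediate. The remaining step -- identifying $\sum_{\pi\in D(s)}t^{|\pi|}$ with the moment of the prescribed classical or free law -- is the standard moment-cumulant dictionary for easy quantum groups, which I would simply apply rather than re-derive.
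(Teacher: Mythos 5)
Your proof follows the same path as the paper's: start from Proposition 6.1, use the asymptotic concentration of the Weingarten matrices $W_{sM},W_{sN}$ on the diagonal to collapse the four-fold sum to a two-fold one, use the join inequality to further collapse to a single sum over $\pi$ in the category $D$, and identify the resulting limit moment $\sum_\pi(\kappa\lambda/\mu)^{|\pi|}$ with the appropriate classical/free law via the standard easy-quantum-group dictionary. One small slip worth flagging: the claim that $|\pi\vee\tau|\leq\min(|\pi|,|\tau|)$ with equality forcing $\pi=\tau$ is not literally correct (any strictly nested pair $\tau<\pi$ already gives $|\pi\vee\tau|=|\pi|=\min(|\pi|,|\tau|)$ with $\pi\neq\tau$); the inequality the paper actually invokes, $|\pi\vee\tau|\leq\frac{|\pi|+|\tau|}{2}$ with equality iff $\pi=\tau$, is the statement that makes the exponent $2|\pi\vee\tau|-|\pi|-|\tau|$ vanish exactly on the diagonal. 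Also, the opening boundedness remark is unnecessary (and in any case $\|\chi_E\|$ grows with $N$, so the operators are not uniformly bounded); the paper simply establishes convergence of moments and explicitly notes afterwards that stronger modes of convergence are left open.
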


\begin{proof}
We use the general theory in \cite{ba1}, \cite{bb+}, \cite{bsp}. According to Proposition 6.1 above, in terms of $K=|E|$, the moments of the variables in the statement are given by:
$$M_s=\sum_{\pi\sigma\tau\nu}K^{|\pi\vee\tau|}L^{|\sigma\vee\nu|}W_{sM}(\pi,\sigma)W_{sN}(\tau,\nu)$$

We use now two standard facts, namely the fact that in the $N\to\infty$ limit the Weingarten matrix $W_{sN}$ is concentrated on the diagonal, and the fact that we have $|\pi\vee\sigma|\leq\frac{|\pi|+|\sigma|}{2}$, with equality precisely when $\pi=\sigma$. See \cite{bsp}. In the regime $K=\kappa N,L=\lambda N,M=\mu N,N\to\infty$ from the statement, we therefore obtain:
\begin{eqnarray*}
M_s
&\simeq&\sum_{\pi\tau}K^{|\pi\vee\tau|}L^{|\pi\vee\tau|}M^{-|\pi|}N^{-|\tau|}\\
&\simeq&\sum_\pi K^{|\pi|}L^{|\pi|}M^{-|\pi|}N^{-|\pi|}\\
&=&\sum_\pi\left(\frac{\kappa\lambda}{\mu}\right)^{|\pi|}
\end{eqnarray*}

In order to interpret this formula, we use general theory from \cite{bb+}, \cite{nsp}:

(1) For $G_N=O_N,\bar{O}_N/O_N^+$, the above variables $\chi_E$ follow to be asymptotically Gaussian/semicircular, of parameter $\frac{\kappa\lambda}{\mu}$, and hence in Bercovici-Pata bijection.

(2) For $G_N=U_N,\bar{U}_N/U_N^+$ the situation is similar, with $\chi_E$ being asymptotically complex Gaussian/circular, of parameter $\frac{\kappa\lambda}{\mu}$, and in Bercovici-Pata bijection. 

(3) Finally, for $G_N=H_N^s/H_N^{s+}$, the variables $\chi_E$ are asymptotically Bessel/free Bessel of parameter $\frac{\kappa\lambda}{\mu}$, and once again in Bercovici-Pata bijection.  
\end{proof}

The convergence in the above result is of course in moments, and we do not know whether some stronger convergence results can be formulated. Nor do we know whether one can use linear combinations of coordinates which are  more general than the sums $\chi_E$ that we consider. These are interesting questions, that we would like to raise here.

\end{document}